\documentclass[12pt]{amsart}

\usepackage{amsmath,
amsthm,amsfonts,amssymb, latexsym,enumerate,url}

\usepackage{microtype}

\usepackage[colorlinks=true, 
linkcolor = blue,
citecolor=black,
urlcolor=black,
]{hyperref}

\usepackage{comment}

\makeatletter
\renewenvironment{proof}[1][\proofname]{\par\pushQED{\qed}%
	\normalfont \topsep6\p@\@plus6\p@\relax
	\trivlist
	\item\relax
		{\bfseries
	#1\@addpunct{.}}\hspace\labelsep\ignorespaces}{%
	\popQED\endtrivlist\@endpefalse
}
\makeatother

\renewcommand{\bf}{\textbf}

\newcommand{\ce}[1]{\mathcal{#1}}

\renewcommand{\bar}[1]{\mkern 1.5mu\overline{\mkern-1.5mu#1\mkern-1.5mu}\mkern 1.5mu}

\newcommand{\ls}{<}
\newcommand{\g}{>}

\newcommand{\OR}{\mathcal{O}}

\newcommand{\sm}{\setminus}

\newcommand{\nr}[1][G]{\textnormal{\bf{N}}_{#1}}
\newcommand{\cn}[1][G]{\textnormal{\bf{C}}_{#1}}
\newcommand{\op}{\textnormal{\bf{O}}}
\newcommand{\zn}{\textnormal{\bf{Z}}}

\newcommand{\IRR}{\textnormal{Irr}}

\newcommand{\AU}{\textnormal{Aut}}

\newcommand{\syl}[1][p]{\textnormal{Syl}_{#1}}
\newcommand{\core}[1][G]{\textnormal{core}_{#1}}

\renewcommand{\subset}{\subseteq}
\renewcommand{\supset}{\supseteq}

\begin{document}

\theoremstyle{plain}

\newtheorem{thm}{Theorem}[section]
\newtheorem{lem}[thm]{Lemma}
\newtheorem{conj}[thm]{Conjecture}
\newtheorem{pro}[thm]{Proposition}
\newtheorem{cor}[thm]{Corollary}
\newtheorem{que}[thm]{Question}
\newtheorem{rem}[thm]{Remark}
\newtheorem{defi}[thm]{Definition}
\newtheorem{hyp}[thm]{Hypothesis}

\newtheorem*{thmA}{THEOREM A}
\newtheorem*{thmB}{THEOREM B}
\newtheorem*{corB}{Corollary B}

\newtheorem*{thmC}{THEOREM C}
\newtheorem*{conjA}{CONJECTURE A}
\newtheorem*{conjB}{CONJECTURE B}
\newtheorem*{conjC}{CONJECTURE C}

\newtheorem*{thmAcl}{Main Theorem$^{*}$}
\newtheorem*{thmBcl}{Theorem B$^{*}$}

\newtheorem{theo}{Theorem}
\renewcommand{\thetheo}{\Alph{theo}}

\numberwithin{equation}{section}

\marginparsep-0.5cm

\renewcommand{\thefootnote}{\fnsymbol{footnote}}
\footnotesep6.5pt

\title{On The Eaton--Moret\'o Conjecture for Principal Blocks of Finite Groups}

\author[Arranz]{Asier Arranz}

\address[Arranz]{Departamento de Matemáticas Fundamentales, UNED, 28040, Madrid, Spain}
\email{aarranz97@alumno.uned.es}

\author[G\'omez--Serrano]{Javier G\'omez--Serrano}
\address[G\'omez--Serrano]{Department of Mathematics, 
Brown University, 
Providence, RI 02912, USA}
\email{javier\_gomez\_serrano@brown.edu}

\author[Navarro]{Gabriel Navarro}
\address[Navarro]{Departament de Matem\`atiques, Universitat de Val\`encia, 46100 Burjassot,
Val\`encia, Spain}
\email{gabriel@uv.es}

\author[Schaeffer Fry]{A. A. Schaeffer Fry}
\address[Schaeffer Fry]{Department of Mathematics, University of Denver, Denver, CO 80210, USA}
\email{mandi.schaefferfry@du.edu}

\subjclass[2020]{Primary 20C15, 20C20; Secondary 20D20}

\keywords{Eaton--Moret\'o conjecture, Brauer $p$-blocks, Character degrees}

\begin{abstract}
Let $G$ be a finite group and let $p$ be a prime. If $P$ is a nonabelian Sylow
$p$-subgroup of $G$ and $m(P)$ is the smallest non-linear
irreducible character degree of $P$, we prove that there exists
$\chi \in {\rm Irr}(G)$ in the principal $p$-block of $G$ 
such that $1<\chi(1)_p\le m(P)$, giving one inequality of the Eaton--Moret\'o conjecture for principal blocks. This, assuming Dade's Projective conjecture,
implies  the  Eaton--Moret\'o conjecture for principal blocks.
\end{abstract}

\thanks{JGS has been partially supported by the U.S. National Science Foundation, under Grants DMS-2245017, DMS-2247537 and DMS-2434314, and by a Simons Fellowship. AASF is partially supported by a grant from the U.S. National Science Foundation, Award No. DMS-2439897.  The research of GN is supported by Grant PID2022-137612NB-I00
 funded by MCIN/AEI/ 10.13039/501100011033 and ERDF ``A way of making Europe"}

\maketitle

\section{Introduction}

Let $B$ be a Brauer $p$-block of a finite group $G$ with defect group $P$. The recently-proved (see \cite{KM13, MNST,  Ruh25}) Height Zero Conjecture of Brauer asserts that  every irreducible complex character in $B$ has height zero if and only if $P$ is abelian.

Suppose now that $P$ is nonabelian. In \cite{EM}, C. Eaton and A. Moret\'o conjectured that $m=h$, where $p^m$ is the smallest degree greater than $1$ of an irreducible character of $P$, and $h$ is the smallest positive height among the irreducible characters in $B$. Increasing evidence supports this conjecture
(\cite{BM, FLZ, MMR, MS, NEM}).

Eaton and Moret\'o showed in \cite{EM} that Dade's Projective Conjecture implies the inequality $m \leq h$. Since Dade's conjecture is widely believed to hold, attention has therefore focused on proving the reverse inequality $h \leq m$.
In the most important case of principal blocks, where $P$ is a nonabelian Sylow $p$-subgroup of $G$,  the inequality $h \leq m$ amounts to showing that there exists $\chi \in {\rm Irr}(G)$ in the principal $p$-block of $G$ such that
$$
1<\chi(1)_p \leqslant p^m,
$$
where $p^m$ is the smallest non-linear irreducible
character degree of $P$. This is precisely what we prove in this paper.

\begin{theo}\label{thmA:A}
Let $G$ be a finite group, let $p$ be a prime, and let $P$ be a nonabelian Sylow $p$-subgroup of $G$. If the smallest non-linear irreducible character degree of $P$ is $p^m$, then there exists an irreducible complex character $\chi \in \operatorname{Irr}(G)$ in the principal $p$-block of $G$ such that
\[
1 < \chi(1)_p \leqslant p^m.
\]
\end{theo}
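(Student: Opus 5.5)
Argue by a minimal counterexample: take $G$ of minimal order such that $P\in\syl(G)$ is nonabelian with smallest nonlinear degree $p^m$, yet every $\chi\in\IRR(B_0(G))$ satisfies $\chi(1)_p=1$ or $\chi(1)_p>p^m$. We first reduce to $\op_{p'}(G)=1$. The characters of $B_0(G/\op_{p'}(G))$ are exactly those of $B_0(G)$ containing $\op_{p'}(G)$ in their kernel, and $P\op_{p'}(G)/\op_{p'}(G)\cong P$. Hence $G/\op_{p'}(G)$ is again a counterexample, and minimality forces $\op_{p'}(G)=1$.

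Next let $N$ be a minimal normal subgroup of $G$.

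If $N$ is a $p$-group, consider $G/N$. Its Sylow $P/N$ is either nonabelian or abelian.

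\emph{$P/N$ nonabelian.} Every nonlinear degree of $P/N$ is a nonlinear degree of $P$, so $m(P/N)\ge p^m$. This gives no upper bound, so induction fails in this direction, and I would handle the case differently. Choose $\theta\in\IRR(P)$ with $\theta(1)=p^m$. Work with characters of $G$ lying over a suitable linear character $\lambda$ of a normal $p$-subgroup $Q=\op_p(G)$. Every character over $\lambda$ lies in $B_0(G)$, since $\op_{p'}(G)=1$ and $\cn(Q)\subseteq Q$ in the relevant $p$-constrained situation (Fong). Use Clifford theory with $T=I_G(\lambda)$. The stabilizer index $|G:T|_p$ equals $|P:P_\lambda|$ for a good choice of $\lambda$ in a $P$-orbit. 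Characters of $T$ over $\lambda$ then give degrees with $p$-part controlled by characters of $P_\lambda/\ker$.

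\emph{$P/N$ abelian.} Then $P'\le N$, and the bound should come directly from orbit sizes of $P$ on $\IRR(N)$.

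So the $p$-solvable/$p$-constrained part reduces to an orbit-counting statement. Some $\lambda\in\IRR(\op_p(G))$ has a nontrivial $P$-orbit whose $p$-part of the induced degree is at most $p^m$. This should follow because $\theta$ restricted to $Q$ gives, via Clifford theory in $P$, exactly such $\lambda$ with $|P:P_\lambda|\le p^m$. When the resulting character is nonlinear but the orbit is trivial, use Gallagher-type arguments in $T/Q$.

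If $\op_p(G)=1$ and $\op_{p'}(G)=1$, then $F^*(G)$ is a direct product of nonabelian simple groups $S_1\times\dots\times S_t$ with $p\mid|S_i|$. Here I would invoke known results on principal blocks of almost simple groups. For each simple $S$ with Sylow $R$ and each $p$-subgroup of $\AU(S)$, there is a $\operatorname{Aut}$-invariant, or suitably controlled, $\psi\in\IRR(B_0(S))$ with $1<\psi(1)_p\le$ the minimal nonlinear degree of $R$. We also need principal-block characters of $p'$-degree extending to their stabilizers. Then build $\chi$ over a product $\psi\times1\times\dots\times1$ (or over $N$ with $G/N$ handled by induction when $P/N$ or $PN/N$ remains nonabelian). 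Use Alperin–Dade/Murai-type results, which say characters covering principal-block characters of $N$ in $G$ can be taken in $B_0(G)$ when $G/N$ is a $p'$-group or $p$-group. When $PN/N$ is nonabelian, induction on $G/N$ works only if $m(PN/N)$ relates to $m(P)$, which it does not generally. So I would instead use $P\cap N\trianglelefteq P$ and choose $\theta$ of degree $p^m$ whose restriction to $P\cap N$ is either nonlinear or linear.

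The main obstacle I expect is this almost simple step. It needs a case-by-case verification, via the classification, for groups of Lie type in nondefining characteristic and alternating groups. The verification must show that a principal-block character of small positive height exists which is compatible with the action of the Sylow subgroup of the outer automorphism group. Matching the bound $m(P)$ exactly, rather than just some positive height, is delicate when $P$ is not contained in $N$.
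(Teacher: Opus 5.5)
Your reduction to $\op_{p'}(G)=1$ agrees with the paper. The treatment of a minimal normal $p$-subgroup, however, has a genuine gap. Throughout, $m(P)=p^m$ denotes the smallest nonlinear degree of $P$.

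First, the claim that every character over a linear $\lambda\in\IRR(\op_p(G))$ lies in $B_0(G)$ needs $\cn(\op_p(G))\subseteq\op_p(G)$. Having a minimal normal $p$-subgroup does not make $G$ $p$-constrained. For $p=2$ and $G=D_8\times A_5$, the character $\lambda\times\chi_4$, with $\chi_4$ the degree-$4$ defect-zero character of $A_5$, is not in $B_0(G)$.

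More seriously, the orbit counting cannot work. Take a constituent $\lambda$ of $\theta_Z$ ($Z$ a normal $p$-subgroup) with $|P:P_\lambda|\le p^m$. This gives no control over $|G:G_\lambda|_p$. The reason is that $\lambda$ may be fixed by another Sylow $p$-subgroup $R$ with $P_\lambda=P\cap R$, and then every character of $G$ over $\lambda$ can have $p'$-degree. Already $G=S_4$, $p=2$, $Z=V_4$ shows this. The two nontrivial $\lambda$ under the degree-$2$ character of $P=D_8$ have $|P:P_\lambda|=2$, yet $G_\lambda\in\syl[2](G)$ and the characters over them have degree $3$. The only character with $\chi(1)_2=2$ lies over $1_Z$. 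It comes from the nonnormality of the image of $P$ in $G/\cn(Z)\cong S_3$.

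This is exactly the situation isolated in the proof of Theorem~\ref{thm:remove_pabelian}. There the degree-$m(P)$ character of $P$ is induced from a linear character of $P_\lambda=P\cap R$. Lemma~\ref{lem:abelian_psection} then makes $\bar{P}/\bar{P_\lambda}$ an abelian nonnormal Sylow subgroup of $\nr[\bar{G}](\bar{P_\lambda})/\bar{P_\lambda}$ in $\bar{G}=G/\cn(Z)$. All characters of $\bar{G}$ lie in $B_0(G)$ by Lemma~\ref{lem:prin_quot}, and the required character is supplied by Theorem~\ref{thm:key}.

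That block-free theorem is the heart of the paper, and your outline has no substitute for it. Its proof is a minimal-counterexample analysis. It uses the inductive McKay condition through isomorphic character triples (Corollary~\ref{cor:mckay_p_part}). It also uses the almost simple Theorem~\ref{thm:almost_simple}, which combines \cite[Corollary~6]{Zhang} (maximal Sylow intersections are defect groups) with the height zero theorem, a computation for $\mathrm{Co}_3$, and the Bruhat decomposition in defining characteristic. With it one obtains that either Theorem~\ref{thmA:A} holds or $m(P/Z)=m(P)$. So induction on $G/Z$, which you discarded, is exactly what finishes this case.

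For a nonabelian minimal normal subgroup $K=S_1\times\cdots\times S_t$, you correctly identify matching the bound as the difficulty but leave it open. Controlled characters $\psi\in\IRR(B_0(S))$ with $\psi(1)_p\le m(P\cap S)$ do not help, since $m(P)$ can be smaller than $m(P\cap S)$ when $P\not\subseteq K$.

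The missing observation is that any $\chi\in\IRR(B_0(G))$ with $\chi(1)_p=p$ suffices, because $m(P)\ge p$. The paper produces such a $\chi$ in two situations. One is when $P$ moves the $S_i$ (Lemma~\ref{lem:simple_product}, which again needs Theorem~\ref{thm:key} in one subcase). The other is when $p$ divides $|\nr(S_i):S_i\cn(S_i)|$ (Lemma~\ref{lem:almost_simple_p}).

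Otherwise the paper first reduces to $\op^{p'}(G)=G$, a step your proposal lacks. Then $G=K\times\cn(K)$, so $m(P)=\min\{m(P\cap K),m(P\cap\cn(K))\}$. Induction together with the simple-group case of \cite{BM} finishes the proof. As it stands, your outline does not yet amount to a proof.
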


The proof of Theorem \ref{thmA:A} uses the Classification of Finite Simple Groups in several ways, and also depends on the recently-proved strong forms of the McKay conjecture (\cite{CS25}, \cite{R23}).

\begin{corB}
Let $G$ be a finite group. If Dade's Projective conjecture holds for the principal block of $G$,
then so does the Eaton--Moret\'o conjecture.
\end{corB}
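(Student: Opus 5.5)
Corollary B follows by combining Theorem \ref{thmA:A} with the implication of Eaton and Moret\'o recalled in the Introduction. Let $B_0=B_0(G)$ be the principal $p$-block of $G$, with defect group $P\in\syl(G)$. Let $h$ be the smallest positive height of a character in $B_0$, and let $p^m$ be the smallest non-linear character degree of $P$.

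If $P$ is abelian, the Eaton--Moret\'o conjecture makes no claim. Indeed, by the Height Zero Conjecture (now a theorem, \cite{KM13, MNST, Ruh25}) every character in $B_0$ has height zero, and $P$ has no non-linear characters, so the statement is vacuous. So assume $P$ is nonabelian.

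For the inequality $m\le h$, I would invoke the result of \cite{EM} quoted in the Introduction: Dade's Projective Conjecture for a block $B$ implies $m\le h$ for $B$. I need to check that their argument only uses Dade's Projective Conjecture for the block $B$ itself, or for blocks that, in the principal case, are again principal blocks. Their argument runs through the alternating sum over $p$-chains, counting characters of positive height in $B$ against the local blocks $b$ with $b^G=B$ in normalizers $\nr(\sigma)$. For $B=B_0$, Brauer's third main theorem makes these the principal blocks of the chain normalizers. So the hypothesis "Dade's Projective conjecture holds for the principal block of $G$" is exactly what their argument consumes, and it gives $m\le h$.

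For the reverse inequality $h\le m$, Theorem \ref{thmA:A} gives $\chi\in\IRR(B_0)$ with $1<\chi(1)_p\le p^m$. Since $|G:P|_p=1$, the height of $\chi$ is the exponent of $\chi(1)_p$. That exponent is positive and at most $m$, so $h\le m$. Hence $h=m$, which is the Eaton--Moret\'o conjecture for $B_0$.

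The only delicate point is the bookkeeping in the $m\le h$ step: making sure the formulation of Dade's Projective conjecture used in \cite{EM} is the one assumed for the principal block. I would handle this by quoting their theorem in its block-wise form rather than reproving it.
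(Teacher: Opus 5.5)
Your proposal is correct and matches how the paper obtains Corollary B. The paper quotes $m\le h$ from \cite{EM} as a consequence of Dade's Projective Conjecture (like you, without re-examining that argument), and gets $h\le m$ from Theorem~\ref{thmA:A}, since in the principal block the height of $\chi$ is just $\log_p \chi(1)_p$; your remarks on the abelian case and on Brauer's third main theorem are harmless extra bookkeeping.
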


\medskip

\bigskip

\noindent
\bf{Acknowledgement.}~~This paper grew out of a conversation between Navarro and G\'omez--Serrano at the ICM in Philadelphia. Still in awe of the recent advances in artificial intelligence in mathematics, Navarro remarked that he felt that the representation theory of finite groups was, in some sense, still {\sl safe} from AI.
G\'omez--Serrano replied: ``Give me a very good theorem that you would like to prove, not the deepest conjecture in your area, but a genuinely interesting and difficult one.'' The theorem Navarro proposed was a weaker 
block--free version of Theorem \ref{thmA:A} of this paper. After a few days and several suggestions, AI produced a proof of this result. For this purpose, we used a workflow involving ChatGPT-5.6-Sol and Claude Fable 5, with G\'omez--Serrano and Navarro setting up the problem and asking Navarro for further input and advice approximately every $\sim 24$h.
A few days later, again with some help from AI, we managed to prove Theorem \ref{thmA:A} of this paper. The proof presented here is the fruit of many simplifications and revisions of the original one, and has been written entirely by us.

\section{Preliminaries}

We begin with a few elementary observations. Our notation for characters follows \cite{CTFG} and \cite{CTN}. As usual, given a finite group $G$ and $N\lhd G$, we identify $\IRR(G/N)$ with the set of irreducible characters of $G$ whose kernel contains $N$.   If $p$ is a prime and $G$ is a finite group, we use the standard notation $\IRR_{p'}(G)$ to denote the irreducible
characters of $G$ of degree not divisible by $p$.

\begin{lem}\label{lem:ind_p_bound}
Let $H\subset G$ be finite groups, $p$ a prime, and let $\psi\in \IRR(H)$. Then there exists $\chi\in \IRR(G|\psi)$ such that $\chi(1)_p\leqslant |G:H|_p\psi(1)_p$. 
\end{lem}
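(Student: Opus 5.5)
Consider the induced character $\psi^G$, which has degree $|G:H|\psi(1)$. By Frobenius reciprocity, its irreducible constituents are exactly the characters in $\IRR(G|\psi)$, so we can write
$$
\psi^G=\sum_{\chi\in\IRR(G|\psi)} a_\chi \chi, \qquad a_\chi=[\psi^G,\chi]=[\psi,\chi_H]\geqslant 1 .
$$
Taking degrees gives
$$
|G:H|\,\psi(1)=\sum_{\chi\in\IRR(G|\psi)} a_\chi\,\chi(1).
$$

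Now argue by contradiction. Suppose every $\chi\in\IRR(G|\psi)$ satisfies $\chi(1)_p>|G:H|_p\psi(1)_p$. Since these are powers of $p$, each $\chi(1)$ is then divisible by $p\cdot|G:H|_p\psi(1)_p$. Hence the right-hand side of the degree identity is divisible by $p\,|G:H|_p\psi(1)_p$. The left-hand side, however, has $p$-part exactly $|G:H|_p\psi(1)_p$. This is a contradiction, so some $\chi\in\IRR(G|\psi)$ satisfies $\chi(1)_p\leqslant |G:H|_p\psi(1)_p$.

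There is no serious obstacle here. The one point to state explicitly is that $\IRR(G|\psi)$ is nonempty, since $\psi^G\neq 0$, and that it coincides with the set of constituents of $\psi^G$; both follow from Frobenius reciprocity. The argument is then a one-line $p$-adic valuation comparison.
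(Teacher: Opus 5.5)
Your proposal is correct and is the same argument as the paper's. Both decompose $\psi^G$ into its constituents in $\IRR(G|\psi)$, take degrees, and compare $p$-parts: the paper states directly that the $p$-part of the sum is at least the smallest $\chi(1)_p$, and your contradiction phrasing is just that valuation inequality spelled out.
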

 
\begin{proof}
This is clear since 
\[|G:H|_p \psi(1)_p = (\psi^G(1))_p = \left( \sum_{\chi\in \IRR(G|\psi)} [\chi,\psi^G] \chi(1)\right)_p \geqslant \min_{\chi\in \IRR(G|\psi)} \chi(1)_p\, .\]
\end{proof}

\begin{lem}\label{lem:perf_ext}
Let $N\lhd G$ be finite groups, let $p$ be a prime, and let $\theta\in \IRR_{p'}(N)$ be $G$-invariant. If $N$ is perfect and $G/N$ is a $p$-group, then $\theta$ extends to $G$.
\end{lem}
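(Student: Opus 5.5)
We will use the standard obstruction theory for extending invariant characters, in the form of Isaacs' character triple results (see \cite{CTFG}, Chapter 11). First we reduce to Sylow subgroups. If $P/N \in \syl(G/N)$, then $\theta$ is also $P$-invariant. Since $|G:P|$ is prime to $p$ while $G/N$ is a $p$-group, we have $G = P$ in our situation. So there is nothing to reduce, and we work directly with $G$ and the quotient $G/N$, which is a $p$-group.

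The main tool is the cohomology class $[\theta]_{G/N} \in H^2(G/N, \mathbb{C}^\times)$ attached to the invariant character $\theta$. The character $\theta$ extends to $G$ exactly when this class is trivial. We know two facts about it.
\begin{enumerate}[(i)]
\item Its order divides $|G/N|$, a power of $p$. This follows from the standard fact that the order of the obstruction class divides the order of the acting group, or equivalently by restricting to Sylow subgroups.
\item Its order divides $\theta(1)\, o(\theta)$, where $o(\theta)$ is the determinantal order of $\theta$ (Isaacs, Theorem~11.31 type statement: if $(|G:N|, \theta(1)o(\theta)) = 1$ then $\theta$ extends).
\end{enumerate}

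Because $N$ is perfect, every linear character of $N$ is trivial. In particular $\det\theta = 1_N$, so $o(\theta) = 1$. By hypothesis $\theta(1)$ is prime to $p$. Hence $\gcd(|G:N|, \theta(1)o(\theta)) = 1$, and Corollary~6.28 / Theorem~8.16 of \cite{CTFG} gives the extension directly.

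Here is how that extension is built. Since $\det\theta$ is trivial and $G$-invariant, $\theta$ has a canonical projective representation $\mathcal{X}$ of $G$ extending a representation affording $\theta$, normalized to have determinant one. With this normalization the factor set takes values in $\theta(1)$-th roots of unity, which have $p'$-order. The factor set is also $p$-power-torsion because $G/N$ is a $p$-group. A class that is both $p'$-torsion and $p$-power-torsion is trivial, so the class vanishes.

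The only delicate point is applying the gcd criterion in the form that uses $o(\theta)$ and not $o(\theta)$ of some extension to an intermediate group. This is exactly Isaacs' Theorem~8.16 as stated: for $N \lhd G$ with $\theta$ invariant and $(|G:N|, o(\theta)\theta(1)) = 1$, the character $\theta$ extends. Perfectness is what forces $o(\theta) = 1$, and nothing else is needed.
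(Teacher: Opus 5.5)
Your proof is correct and follows the paper's argument. Perfectness of $N$ forces $\det\theta=1_N$, hence $o(\theta)=1$, so $(|G:N|,o(\theta)\theta(1))=1$ and the gcd extension criterion of \cite{CTFG} (Corollary~6.28, or Theorem~8.16) applies.

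Your cohomological paragraph is a valid self-contained justification of that criterion here, though it is not needed. Normalizing to determinant one puts the factor set in $\theta(1)$-th roots of unity, while $|G:N|$ kills the class. Two small points: the Sylow "reduction" is vacuous, and it is the cohomology class, not the factor set itself, that is $p$-power torsion.
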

 
\begin{proof}
Since $N/\ker(\det(\theta))$ is abelian, we have $N=\ker(\det(\theta))$ and thus $o(\theta)=|N:\ker(\det(\theta))| = 1$. Therefore, $(o(\theta)\theta(1), |G:N|)=1$ and the result follows from Corollary 6.28 of \cite{CTFG}. 
\end{proof}

We will also use a consequence of the proof of the inductive condition of the McKay conjecture, now a theorem.
The notation is explained in \cite{R23}.

\begin{thm}\label{thm:mckay}
Let $K\lhd G$ be finite groups, let $p$ be a prime, and let  $P$ be a Sylow $p$-subgroup of $K$. Then there exists a bijection
\[{}^*:\IRR_{p'}(K)\rightarrow \IRR_{p'}(\nr[K](P))\]
such that the character triples $(G_\theta,K,\theta)$ and $(\nr(P)_{\theta^*},\nr[K](P),\theta^*)$ are isomorphic for every $\theta\in \IRR_{p'}(K)$. 
\end{thm}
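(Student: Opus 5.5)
This statement is a consequence of the reduction of the McKay conjecture to simple groups together with its verification. The paper describes it that way ("a consequence of the proof of the inductive condition"), so the plan is to assemble it from the literature rather than prove it from scratch. The key input is the inductive McKay condition, in the strong form involving $N$-block isomorphisms (or at least $G$-isomorphisms) of character triples. It has now been verified for all finite simple groups and all primes: Cabanes--Sp\"ath for groups of Lie type in the remaining cases, and earlier work for the other families. Given this, Sp\"ath's reduction theorem, as refined by Rossi in \cite{R23}, states the following. For every finite group $G$, every $K\lhd G$ and every Sylow $p$-subgroup $P$ of $K$, there is an $\nr(P)$-equivariant bijection $\IRR_{p'}(K)\to\IRR_{p'}(\nr[K](P))$ under which corresponding characters give isomorphic character triples (in the sense of $\geq_c$ or the stronger central isomorphism) in $G$ and $\nr(P)$.

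The steps I would carry out are as follows. First, apply the Frattini argument: $G=K\nr(P)$, so $G$-orbits on $\IRR_{p'}(K)$ correspond to $\nr(P)$-orbits. This is what makes the map $\theta\mapsto\theta^*$ compatible with the stabilizers $G_\theta$ and $\nr(P)_{\theta^*}$; in particular $G_\theta=K\,\nr(P)_{\theta^*}$ once the bijection is equivariant. Second, quote the main theorem of \cite{R23}. It asserts precisely that if every simple group involved satisfies the inductive McKay condition (in the version defined there), then such an equivariant bijection with isomorphic character triples exists for any $K\lhd G$. Third, invoke \cite{CS25} and the previously known cases to ensure that the hypothesis holds for all simple groups. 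Together these give the theorem.

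The main obstacle is bookkeeping rather than mathematics. One must check that the notion of ``isomorphic character triples'' intended here matches what the cited reduction delivers. That reduction typically delivers a relation $(G_\theta,K,\theta)\geq_c(\nr(P)_{\theta^*},\nr[K](P),\theta^*)$, i.e.\ a central isomorphism, and this implies the ordinary isomorphism of character triples used in \cite{CTN}. One must also confirm that the ambient-group version (arbitrary $G$ containing $K$ as a normal subgroup, rather than $G=K$) is covered. I would first check Rossi's statement, since his formulation of the reduction is designed exactly for a normal subgroup $K$ of an arbitrary $G$. Only if necessary would I pass through Sp\"ath's ``inductive condition implies relative statement'' argument by hand.
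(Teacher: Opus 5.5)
Your proposal is correct and matches the paper, which likewise just cites \cite[Theorem B]{R23} together with the remark after Conjecture A there (turning the central relation $\geq_c$ into an isomorphism of character triples), using that the inductive McKay condition now holds for all simple groups by \cite{CS25}. Your separate Frattini step is harmless but unnecessary, since $G_\theta = K\,\mathbf{N}_G(P)_{\theta^*}$ is already part of what the relation $\geq_c$ in Rossi's theorem provides.
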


\begin{proof}
This follows from \cite[Theorem B]{R23} (and the remark following \cite[Conjecture~A]{R23}), which holds by the proof of the inductive McKay condition (see \cite{CS25}). 
\end{proof}

\begin{cor}\label{cor:mckay_p_part}
Let $K\lhd G$ be finite groups, let $p$ be a prime, and let $P$ be a Sylow $p$-subgroup of $K$. If $\xi\in \IRR(\nr(P)/P)$, then there exists $\chi\in \IRR(G)$ with $\chi(1)_p = \xi(1)_p$. 
\end{cor}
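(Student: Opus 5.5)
The idea is to pass from $\xi$ down to a $p'$-degree character of $\nr[K](P)$, transport it to $K$ with the bijection of Theorem~\ref{thm:mckay}, and then climb back up to $G$ through the isomorphism of character triples. The key point is that an isomorphism of character triples preserves the ratios $\eta(1)/\theta(1)$.

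Write $H=\nr(P)$, so that $H\cap K=\nr[K](P)$. By the Frattini argument, $G=KH$.

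\textbf{Step 1: find $\theta$.} Choose an irreducible constituent $\varphi$ of the restriction $\xi_{\nr[K](P)}$. Since $P\subseteq \ker\xi$, we have $P\subseteq\ker\varphi$, so $\varphi$ is a character of the $p'$-group $\nr[K](P)/P$. Hence $\varphi\in\IRR_{p'}(\nr[K](P))$. By Theorem~\ref{thm:mckay}, $\varphi=\theta^*$ for a unique $\theta\in\IRR_{p'}(K)$, and the triples $(G_\theta,K,\theta)$ and $(H_{\varphi},\nr[K](P),\varphi)$ are isomorphic.

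\textbf{Step 2: compare the indices.} Part of the data of an isomorphism of character triples (in the sense of \cite{R23}) is that $G_\theta=KH_\varphi$ and $G_\theta\cap H=H_\varphi$. This amounts to the $H$-equivariance of ${}^*$, which is built into the strong form of the statement. I expect this to be the only delicate point: checking that the triple isomorphism really includes these equalities, not merely an abstract isomorphism $G_\theta/K\cong H_\varphi/\nr[K](P)$. Granting it, we get
\[|G:G_\theta|=|KH:KH_\varphi|=|H:H_\varphi|.\]

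\textbf{Step 3: transport $\xi$ to $G$.} By the Clifford correspondence, $\xi=\eta^H$ for some $\eta\in\IRR(H_\varphi\mid\varphi)$. The character triple isomorphism sends $\eta$ to some $\eta'\in\IRR(G_\theta\mid\theta)$ with
\[\frac{\eta'(1)}{\theta(1)}=\frac{\eta(1)}{\varphi(1)}.\]
Set $\chi=(\eta')^G$, which lies in $\IRR(G)$ by the Clifford correspondence.

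\textbf{Step 4: compute $p$-parts.} Since $\theta(1)$ and $\varphi(1)$ are prime to $p$, the displayed ratio gives $\eta'(1)_p=\eta(1)_p$. Therefore
\[\chi(1)_p=|G:G_\theta|_p\,\eta'(1)_p=|H:H_\varphi|_p\,\eta(1)_p=\xi(1)_p,\]
as required.
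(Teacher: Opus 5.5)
Your proof follows the paper's argument step for step. The one difference is Step~2, where you rely on the equalities $G_\theta=KH_\varphi$ and $G_\theta\cap H=H_\varphi$ (in effect, $\nr(P)$-equivariance of ${}^*$); Theorem~\ref{thm:mckay} does not state these, and you leave them as an assumption. They are not needed: any isomorphism of character triples includes a group isomorphism $G_\theta/K\cong H_\varphi/\nr[K](P)$, so $|G_\theta:K|=|H_\varphi:\nr[K](P)|$, and dividing $|G:K|=|H:\nr[K](P)|$ (which follows from $G=KH$ and $K\cap H=\nr[K](P)$) by this gives $|G:G_\theta|=|H:H_\varphi|$ outright, which is exactly how the paper closes this step.
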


\begin{proof}
We view $\xi$ as a character of $\nr(P)$ with $P\subset \ker(\xi)$. Let $\varphi\in \IRR(\nr[K](P))$ lie under $\xi$. Since $P\subset \ker(\xi_{\nr[K](P)}) \subset \ker(\varphi)$ and $\nr[K](P)/P$ is a $p'$-group, $\varphi$ is of $p'$-degree. By Theorem~\ref{thm:mckay}, there exist $\theta\in \IRR_{p'}(K)$ and an isomorphism of character triples ${}^*:(G_\theta,K,\theta)\rightarrow (\nr(P)_\varphi,\nr[K](P),\varphi)$. In particular, $|G_\theta:K| = |\nr(P)_\varphi:\nr[K](P)|$. Since $G=K\nr(P)$ by the Frattini argument, we deduce that $|G:G_\theta| = |\nr(P):\nr(P)_\varphi|$.

Let $\eta\in \IRR(\nr(P)_\varphi|\varphi)$ be the Clifford correspondent of $\xi$, and let $\psi\in \IRR(G_\theta|\theta)$ with $\psi^*=\eta$. By \cite[Lemma 11.24]{CTFG}, we have $\psi(1)/\theta(1)=\eta(1)/\varphi(1)$ and thus $\psi(1)_p = \eta(1)_p$. Since $\chi=\psi^G\in \IRR(G)$ by the Clifford correspondence, we obtain
\[\chi(1)_p = |G:G_\theta|_p \psi(1)_p = |\nr(P):\nr(P)_\varphi|_p \eta(1)_p =(\eta^{\nr(P)}(1))_p = \xi(1)_p, \]
as required. 
\end{proof}

\section{A key result}\label{sec:keyresult}

The key to proving Theorem~\ref{thmA:A} is Theorem~\ref{thm:key} below. In this section, we study the structure of a counterexample of minimal order to this result, with its proof deferred to the next section.

\begin{thm}\label{thm:key}
Let $G$ be a finite group, let $p$ be a prime, and let $P$ be a Sylow $p$-subgroup of $G$. Suppose $Q\lhd P$ is such that $P/Q$ is abelian and nonnormal in $\nr(Q)/Q$. Then there exists $\chi\in \IRR(G)$ such that 
\[1\ls \chi(1)_p \leqslant |P:Q|\, .\]
\end{thm}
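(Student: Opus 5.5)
We argue by a minimal counterexample $(G,P,Q)$, with $|G|$ minimal. The key reduction is to pass to $\nr(Q)$ and then to a normal subgroup, so that Corollary~\ref{cor:mckay_p_part} applies. We then work back up with Lemma~\ref{lem:ind_p_bound}.

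\textbf{Step 1: $\nr(Q)$ case.} Put $H=\nr(Q)$ and $\bar H=H/Q$. Then $\bar P=P/Q$ is an abelian Sylow $p$-subgroup of $\bar H$, and it is not normal in $\bar H$. Suppose some $\psi\in\IRR(H)$ satisfies $1<\psi(1)_p\le|P:Q|$. We would like $\chi\in\IRR(G|\psi)$ with $\chi(1)_p$ in the same range. Lemma~\ref{lem:ind_p_bound} only gives $\chi(1)_p\le|G:H|_p\psi(1)_p$, and $|G:H|_p=1$ because $P\subseteq H$. So we get $\chi(1)_p\le\psi(1)_p\le|P:Q|$, but we lose the lower bound: $\chi$ could have $p'$-degree.

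To keep control of the lower bound we use a different source. Since $\bar P$ is abelian and nonnormal, $\op^{p'}(\bar H)$ is not a $p$-group. A Frattini-type argument and Corollary~\ref{cor:mckay_p_part} should then give characters of $G$ whose $p$-parts are the $p$-parts of characters of $\nr(P)/P$-type groups. This is the main obstacle: $\nr(P)/P$ is a $p'$-group, so Corollary~\ref{cor:mckay_p_part} by itself only produces $p'$-degrees. The corollary must therefore be applied to a normal subgroup $K\lhd G$ whose Sylow $P\cap K$ is smaller. In that case $\nr(P\cap K)/(P\cap K)$ contains a nontrivial $p$-part $P/(P\cap K)$ acting, and we get $\xi$ with $1<\xi(1)_p\le|P:P\cap K|$.

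\textbf{Step 2: reductions in a minimal counterexample.} First we show $\op_p(G)$ can be assumed trivial or inside $Q$. If $N\lhd G$ is a $p$-group with $N\subseteq Q$, we pass to $G/N$ and use minimality, since characters of $G/N$ are characters of $G$. If $N\not\subseteq Q$, we replace $Q$ by $Q\cap N$-adjusted data; the nonnormality of $P/Q$ in $\nr(Q)/Q$ must be checked to survive, and we split into cases when it fails. Similarly, if $M\lhd G$ is a $p'$-group, we pass to $G/M$. Here $\nr[G/M](QM/M)=\nr(Q)M/M$ by coprimality, so the hypothesis is preserved. Hence $\op_{p'}(G)=1$.

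Next, let $K=\op^{p}(G)$, or more generally the last term where $P\cap K\subseteq Q$ fails. We take $K\lhd G$ minimal with $G/K$ a $p$-group. If $P\cap K\supseteq Q$, then $P/(P\cap K)$ is a quotient of the abelian group $P/Q$, so it has order at most $|P:Q|$. Applying Corollary~\ref{cor:mckay_p_part} to $K\lhd G$ with $\xi\in\IRR(\nr(P\cap K)/(P\cap K))$ gives $\xi(1)_p\le|P:P\cap K|\le|P:Q|$. We then need $\xi$ nonlinear in $p$-part. This holds unless $P/(P\cap K)$ is central modulo $p'$-parts in $\nr(P\cap K)/(P\cap K)$, and in that case we recurse into a proper subgroup $\nr(P\cap K)$, or we handle it through Lemma~\ref{lem:perf_ext}.

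\textbf{Step 3: the remaining case.} When $G=\op^p(G)$ and no proper reduction applies, $G$ should be close to almost simple with abelian-modulo-$Q$ Sylow structure. Here we invoke CFSG, using known character degrees of simple groups: there is a character $\theta$ of $p$-part exactly $p^a\le|P:Q|$, for example a unipotent or Deligne--Lusztig character. We then extend it to $G$ using Lemma~\ref{lem:perf_ext} when $\theta$ is a $p'$-character of the perfect layer, or bound the degree via Lemma~\ref{lem:ind_p_bound}. I expect the main difficulty to be guaranteeing the \emph{lower} bound $1<\chi(1)_p$ in Step 2, where $\xi$ might turn out linear. This is exactly where the nonnormality of $P/Q$ in $\nr(Q)/Q$ must be used, through a Clifford argument on a $p'$-character of $\nr(P\cap K)$ that is not stabilized by $P$.
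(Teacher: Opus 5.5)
\textbf{There are genuine gaps.} As written, your proposal is a strategy sketch, and one of its concrete reductions is false.

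\textbf{The $\op_{p'}$ reduction.} In Step~2 you claim that if $M\lhd G$ is a $p'$-group then the hypothesis passes to $G/M$, so that $\op_{p'}(G)=1$. The equality $\nr[G/M](QM/M)=\nr(Q)M/M$ is correct, but it does not give nonnormality. If $R$ is a second Sylow $p$-subgroup of $\nr(Q)$, one may well have $PM=RM$. Take $G=S_3$, $p=2$, $P=\langle(12)\rangle$, $Q=1$. The hypothesis holds, but $G/\op_{2'}(G)\cong C_2$ has only linear characters, and the degree-$2$ character you need is not in $\IRR(G/M)$.

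The paper proceeds the other way round. It first replaces $Q$ by $D=P\cap R$ (Lemma~\ref{lem:section}); this also settles your unclear case $\op_p(G)\not\subseteq Q$, since $\op_p(G)\subseteq D$. It then shows that in a minimal counterexample, for \emph{every} $1\neq N\lhd G$, one has $PN=RN$ with $R=P^n$ for some $n\in N\cap\nr(D)$. Hence $G$ has a unique minimal normal subgroup $K$. The case where $K$ is a $p'$-group then needs a real argument (Lemma~\ref{lem:p'_min_nor}). A $D$-invariant but non-$P$-invariant $\theta\in\IRR(K)$ exists because $k\in\cn[K](D)\sm\cn[K](P)$, by \cite[Lemma~2.2]{N93}. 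Lemma~\ref{lem:ruling_out} then turns $\theta$ into a suitable character of $G$ via Clifford theory and minimality applied to $G/K$.

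\textbf{The lower bound in your use of Corollary~\ref{cor:mckay_p_part}.} You never obtain $\xi(1)_p>1$. With $K=\op^p(G)$ the argument is vacuous when $G=\op^p(G)$, since then $\nr(P)/P$ is a $p'$-group. Otherwise you would need $P/(P\cap K)$ to be nonnormal in $\nr(P\cap K)/(P\cap K)$, which you do not prove. In the paper the corollary is applied with $K$ the unique minimal normal subgroup, in the case $P\cap K\subseteq D$. There the element $k\in K\cap\nr(D)$ with $P^k=R$ normalizes $P_0=P\cap K$, so $P$ and $R$ both lie in $\nr(P_0)$. Minimality of $G$ applied to $\nr(P_0)/P_0$ then supplies $\xi$ with $1<\xi(1)_p\le|P:D|$.

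\textbf{Step~3.} It omits the reduction to an almost simple group. Showing that a simple factor $S_i$ with $D_i<P_i$ is $P$-invariant needs \cite[Lemma~2.2]{H17} together with Lemma~\ref{lem:ruling_out}. One must also track $U$ and $V=U^a$ with $a$ in the socle, $E=U\cap V\lhd U,V$, and $|U:E|\le|P:D|$.

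More importantly, Step~3 omits the comparison of $|U:E|$ with character degrees, which is not a matter of ``known degrees'':
\begin{enumerate}[(i)]
\item With abelian Sylow subgroups, one needs that maximal Sylow intersections are defect groups \cite{Zhang}, together with the height zero theorem \cite{KM13}.
\item In defining characteristic, one needs the Bruhat decomposition to get $|U:E|\ge q$, and then a character with $1<\chi(1)_p\le q$.
\item When $p$ divides $|A:S|$, one needs height-one characters.
\item The case $(p,S)=(3,\mathrm{Co}_3)$ requires a direct check.
\end{enumerate}
Finally, lifting back to $G$ is done via the Clifford correspondence using $G_\theta\subseteq\nr(S_i)\supseteq P$. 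It is not done by Lemma~\ref{lem:perf_ext}, which only extends $p'$-degree characters.
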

 
The following lemma allows us to replace $P/Q$ with a more suitable abelian section.

\begin{lem}\label{lem:section}
Assume the hypotheses and notation of Theorem~\ref{thm:key}. Then there exists $R\in \syl(G)$ such that $Q\subset D=P\cap R\ls P$ and $D\lhd P,R$. Also, $P/D$ is abelian and nonnormal in $\nr(D)/D$.
\end{lem}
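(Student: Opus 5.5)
The plan is to obtain $R$ directly from the non-normality hypothesis, working inside $\nr(Q)$. After that, every required property should follow from $P/Q$ being abelian. No minimality or maximality choice seems to be needed.

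First, since $Q\lhd P$, we have $P\subset \nr(Q)$. Because $P\in\syl(G)$, $P$ is a Sylow $p$-subgroup of $\nr(Q)$, and hence $P/Q\in \syl(\nr(Q)/Q)$. By hypothesis $P/Q$ is not normal in $\nr(Q)/Q$, so there is a Sylow $p$-subgroup of $\nr(Q)/Q$ different from $P/Q$. By Sylow's theorem it has the form $R/Q=(P/Q)^{x}$ with $x\in\nr(Q)$, that is, $R=P^x$. Then $R$ is a Sylow $p$-subgroup of $G$, since $|R|=|P|$. Moreover $Q\subset R$ and $R\neq P$.

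Set $D=P\cap R$. Then $Q\subset D$, and $D\neq P$ because $|R|=|P|$ and $R\neq P$; hence $D\ls P$. Since $P/Q$ is abelian, its subgroup $D/Q$ is normal in $P/Q$, so $D\lhd P$. Also $P/D$ is a quotient of the abelian group $P/Q$ and is therefore abelian. Similarly, $R/Q=(P/Q)^x\cong P/Q$ is abelian, so $D/Q\lhd R/Q$ and $D\lhd R$.

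Finally, $D\lhd P$ and $D\lhd R$ give $P,R\subset \nr(D)$. Both are Sylow $p$-subgroups of $G$, hence Sylow $p$-subgroups of $\nr(D)$. Consequently $P/D$ and $R/D$ are Sylow $p$-subgroups of $\nr(D)/D$, and they are distinct because $P\neq R$ and both contain $D$. Therefore $P/D$ is not normal in $\nr(D)/D$.

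There is no real obstacle here. The only point needing care is checking that $R/Q$ is abelian, so that $D\lhd R$ as well as $D\lhd P$. This holds because $R$ is conjugate to $P$ by an element of $\nr(Q)$, which gives $R/Q\cong P/Q$.
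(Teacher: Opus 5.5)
Your proof is correct and follows the same route as the paper. Both pick a Sylow $p$-subgroup $R\neq P$ of $\nr(Q)$, use $\nr(Q)$-conjugacy to see that $R/Q$ is abelian, set $D=P\cap R$, and note that $P/D$ and $R/D$ are distinct Sylow $p$-subgroups of $\nr(D)/D$; you just write out the details the paper leaves implicit, such as why $D<P$ and $D\lhd P,R$.
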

 
\begin{proof}
Since $P$ is not normal in  $\nr(Q)$, we may choose a Sylow $p$-subgroup $R$ of $\nr(Q)$ distinct from $P$. Then $Q\lhd R$, and since $P$ and $R$ are $\nr(Q)$-conjugate, the quotient $R/Q$ is abelian. Letting $D= P\cap R$, we obtain that $Q\subset D\ls P$ and $D\lhd P,R$. Since $P/D$ and $R/D$ are distinct Sylow $p$-subgroups of $\nr(D)/D$ the proof is complete.
\end{proof}

Before establishing the following lemma, we introduce some standard notation. Given a group $G$ and a normal subgroup $N\lhd G$, we write $\bar{G} = G/N$ and denote the natural projection by $\bar{\phantom{G}}:G\rightarrow \bar{G}:g\mapsto \bar{g}$. Thus, the image of a subgroup $H\subset G$ is $\bar{H}=HN/N$.

\begin{lem}\label{lem:count_key}
Let $G$ be a counterexample of minimal order to Theorem~\ref{thm:key}. Assume the notation of Lemma~\ref{lem:section}. The following then hold. 
\begin{enumerate}[(a)]
\item $\op_p(G)=1$ and $G=\op^{p'}(G)$. 
\item For every nontrivial $N\lhd G$, we have $PN=RN$, and there exists $n\in N\cap \nr(D)$ with $P^n=R$. 
\item $G$ has a unique minimal normal subgroup $K$. 
\end{enumerate}
\end{lem}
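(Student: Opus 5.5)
The plan is to establish the three parts in order, using one recurring device: pass to a smaller group or a quotient in which the hypotheses of Theorem~\ref{thm:key} still hold with $Q$ replaced by a normal subgroup of index at most $|P:D|\le |P:Q|$. Minimality of $G$ then produces a character that contradicts $G$ being a counterexample. Throughout I use the $R$ and $D$ supplied by Lemma~\ref{lem:section}, so that $D<P$ and $R\neq P$.

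\emph{Part (a).} Let $N=\op_p(G)$. Then $N$ is contained in every Sylow $p$-subgroup, so $N\subset P\cap R=D$. In $G/N$, the group $P/N$ is a Sylow $p$-subgroup and $D/N\lhd P/N$. Moreover $(P/N)/(D/N)\cong P/D$ is abelian, and it is nonnormal in $\nr[G/N](D/N)/(D/N)=\nr(D)/D$. If $N\neq 1$, minimality gives $\chi\in\IRR(G/N)\subset\IRR(G)$ with $1<\chi(1)_p\le |P:D|\le |P:Q|$, a contradiction.

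For the second claim, let $H=\op^{p'}(G)$. Then $H$ contains $P$ and $R$, and $\nr[H](D)$ contains both, so the hypotheses hold in $H$ with $D$ in place of $Q$. If $H<G$, minimality gives $\psi\in\IRR(H)$ with $1<\psi(1)_p\le|P:D|$. Any $\chi\in\IRR(G|\psi)$ satisfies $\chi(1)_p=\psi(1)_p$, because $\chi(1)/\psi(1)$ divides the $p'$-number $|G:H|$. This is again a contradiction.

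\emph{Part (b).} Let $1\neq N\lhd G$ and work in $\bar G=G/N$ with $\bar Q:=\bar D=DN/N$. Here $\bar P$ is Sylow in $\bar G$ and $\bar D\lhd \bar P$. The quotient $\bar P/\bar D$ is a quotient of $P/D$, hence abelian, and $|\bar P:\bar D|=|P:P\cap DN|\le|P:D|$. Also $\bar R$ normalizes $\bar D$. If $PN\neq RN$, then $\bar P\neq\bar R$ are two Sylow $p$-subgroups of $\nr[\bar G](\bar D)$, so $\bar P/\bar D$ is nonnormal there. Minimality of $G$ then yields a forbidden character of $G/N$. Hence $PN=RN$.

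To produce $n$, put $M=\nr[PN](D)$. Both $P$ and $R$ are Sylow $p$-subgroups of $M$, so $R=P^m$ for some $m\in M$. Write $m=xn$ with $x\in P$ and $n\in N$. Since $x$ normalizes $D$, so does $n=x^{-1}m$, and $P^n=P^{xn}=R$. Thus $n\in N\cap\nr(D)$, as required.

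\emph{Part (c).} By (a), $G\neq 1$ has a minimal normal subgroup. Suppose $K_1\neq K_2$ are minimal normal, so $K_1\cap K_2=1$. By (b), $R\subset PK_1\cap PK_2=:H$. The map $H\to G/K_1\times G/K_2$ is injective, and its image lies in $PK_1/K_1\times PK_2/K_2$, which is a $p$-group. Hence $H$ is a $p$-group containing $P$, so $H=P$. This forces $R\subset P$, hence $R=P$, contradicting $D=P\cap R<P$.

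The only delicate point is in (b): checking that the quotient still satisfies the nonnormality hypothesis with an index bound of at most $|P:Q|$. This is exactly why one works with $\bar D$ rather than $\bar Q$.
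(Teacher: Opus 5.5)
Your proof is correct. Part (a) and the construction of $n$ in part (b) follow the paper, and you also spell out why $n$ normalizes $D$, which the paper leaves implicit.

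\textbf{Part (b), first claim.} You take a slightly more direct route. The paper sets $\bar{T}=\bar{P}\cap\bar{R}$ and first checks that $\bar{T}\lhd\bar{P},\bar{R}$ with abelian quotients before applying minimality. You apply minimality directly to $\bar{D}$. This is legitimate, because Theorem~\ref{thm:key} only needs some normal subgroup of the Sylow subgroup with abelian, nonnormal quotient. It saves a step at no cost.

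\textbf{Part (c).} Here your route is genuinely different.
\begin{itemize}
\item The paper works element by element. It uses the second half of (b) to get $k_i\in K_i$ with $P^{k_i}=R$, and observes that $k_1k_2^{-1}\in\nr(P)$. It then combines $[K_1,K_2]=1$ with the decomposition $P\cap K_1K_2=(P\cap K_1)\times(P\cap K_2)$ in a commutator computation to force $k_1\in\nr(P)$, so that $R=P$.
\item You use only $PK_i=RK_i$ and $K_1\cap K_2=1$. The subgroup $PK_1\cap PK_2$ embeds into the $p$-group $PK_1/K_1\times PK_2/K_2$, so it equals $P$. Since it contains $R$, you get $R=P$.
\end{itemize}
Your argument is shorter. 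It needs neither the commutation of $K_1$ and $K_2$, nor the Sylow decomposition of $P\cap K_1K_2$, nor the conjugating elements. The element-level version of (b) is still required later in the paper, where one picks $k\in K\cap\nr(D)$ with $P\cap P^k=D$, and you do prove it. For (c) itself, however, your subdirect-product argument is the cleaner one.

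\textbf{Minor point.} The existence of a minimal normal subgroup comes simply from $G\neq 1$, which holds because $R\neq P$. It does not come from (a).
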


\begin{proof}
Since $\op_p(G)$ is the intersection of all Sylow $p$-subgroups of $G$, we have $\op_p(G)\subset D$. If $|G:\op_p(G)|\ls |G|$, then by the minimality of $|G|$ there exists $\chi\in \IRR(G/\op_p(G))$ such that 
\[1\ls \chi(1)_p\leqslant |(P/\op_p(G)):(D/\op_p(G))| = |P:D|\leqslant |P:Q|\,. \]
This contradicts the choice of $G$ as a counterexample. Therefore, $\op_p(G)=1$.

Now suppose that $V=\op^{p'}(G)\ls G$, and observe that $V$ contains every $p$-subgroup of $G$. In particular, $P$ and $R$ are distinct Sylow $p$-subgroups of $\nr[V](D)$, and thus $P/D$ is abelian and nonnormal in $\nr[V](D)/D$. By the minimality of $|G|\g |V|$, there exists $\varphi\in \IRR(V)$  with
\[1\ls \varphi(1)_p \leqslant |P:D|\leqslant |P:Q|\, .\]
By Corollary 11.29 of \cite{CTFG}, any $\chi\in \IRR(G|\varphi)$ satisfies $\chi(1)_p =\varphi(1)_p$, against the choice of $G$ as a counterexample. Therefore, $G=\op^{p'}(G)$.

Let $1\ls N\lhd G$, write $\bar{G}=G/N$, and assume that $T/N=\bar{P}\cap \bar{R}\ls \bar{P}$. Then $\bar{D}\subset \bar{T}$,  $\bar{D} \lhd \bar{P}$, $\bar{D} \lhd \bar{R}$, and since $\bar{P}/\bar{D}$ and $\bar{R}/\bar{D}$ are abelian, we have $\bar{T}\lhd \bar{P},\bar{R}$. It follows that $\bar{P}/\bar{T}$ is abelian and nonnormal in $\nr[\bar{G}](\bar{T})/\bar{T}$. By the minimality of $|G|\g |\bar{G}|$, there exists $\chi\in \IRR(\bar{G})$
\[1\ls \chi(1)_p\leqslant |\bar{P}:\bar{T}|\leqslant |\bar{P}:\bar{D}|\leqslant |P:D|\leqslant |P:Q|\, .\]
This contradicts the choice of $G$, and hence $PN=RN$. It follows that $P$ and $R$ are Sylow $p$-subgroups of $\nr[PN](D)$, and therefore there is  $xn\in \nr[PN](D)$ with $x\in P$ and $n\in N$ such that $R=P^{xn} =P^n$. This establishes (b).

Let $K_1$ and $K_2$ be distinct minimal normal subgroups of $G$, so that $K_1\cap K_2 = 1$ and $[K_1,K_2]=1$. By part (b), there exists $k_i\in K_i$ such that $P^{k_i} = R$. Then $x=k_1k_2^{-1}$ normalizes $P$. Since $P$ is a Sylow $p$-subgroup of $G$ and $K_1, K_2\lhd G$, recall that
\[P\cap (K_1K_2)=(P\cap K_1)(P\cap K_2) = (P\cap K_1)\times (P\cap K_2)\, .\]
In particular, if $z \in P$ and $z=f_1f_2$ for some $f_i \in K_i$, it follows that $f_i \in P\cap K_i$.
Now, let $y\in P$ and observe that
\[ [y,x] =[y,k_1k_2^{-1}]=[y,k_2^{-1}][y,k_1]^{k_2^{-1}} =[y,k_2^{-1}] [y,k_1] \in  K_2\times K_1\, .\]
Since $[y,x]=y^{-1} y^x\in P$, we deduce that $[y,k_1]\in P\cap K_1$ and $[y,k_2^{-1}] \in P\cap K_2$
for all $y \in P$. In particular, $k_1$ normalizes $P$. Thus $P=P^{k_1} = R$, a contradiction. 
\end{proof}

Next, we study the unique minimal normal subgroup of $G$.

%%%%%%%%%%%%%%%%%%%%%%%%%%%%%%%%%%%%%%%%%%%%%%%%%%%%%%%%%%%%%%%%%%%%%%%%%%%%%%%%%%%%%%%%%%%%%%%%%%%%%%%%%%%%%%%%%%%%%%%%%%%%%%%%%%%%%%%%%%%%%%%%%%%%%%

\begin{lem}\label{lem:ruling_out}
Let $G$ be a counterexample of minimal order to  Theorem~\ref{thm:key}, and assume the notation of Lemmas~\ref{lem:section} and~\ref{lem:count_key}. Then there does not exist a character $\theta\in \IRR_{p'}(K)$ such that the stabilizer $P_\theta$ is a proper normal subgroup of $P$, the quotient $P/P_\theta$ is abelian, and $|P:P_\theta|\leqslant |P:D|$. 
\end{lem}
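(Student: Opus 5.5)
Suppose, for a contradiction, that such a $\theta\in\IRR_{p'}(K)$ exists. The plan is to build from $\theta$ an irreducible character of $G$ whose $p$-part lies strictly between $1$ and $|P:D|$. Since $|P:D|\leqslant |P:Q|$, this would contradict the choice of $G$ as a counterexample.

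\emph{Step 1: extend $\theta$ to $KP_\theta$.} Set $H=KP$. The stabilizer of $\theta$ in $H$ is $H_\theta=KP_\theta$, and $H_\theta/K\cong P_\theta/(P_\theta\cap K)$ is a $p$-group. By Lemma~\ref{lem:count_key}, $K$ is the unique minimal normal subgroup of $G$, so it is of one of two types.
\begin{enumerate}[(i)]
\item If $K$ is nonabelian, then $K$ is perfect. Since $\theta$ has $p'$-degree, Lemma~\ref{lem:perf_ext} gives an extension $\hat\theta\in\IRR(KP_\theta)$.
\item If $K$ is abelian, then $K$ is a $p'$-group because $\op_p(G)=1$. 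Then $\theta$ is linear and $(|K|,|KP_\theta:K|)=1$, so $\theta$ again extends, by Corollary 6.28 of \cite{CTFG}.
\end{enumerate}

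\emph{Step 2: induce to $KP$, then to $G$.} By the Clifford correspondence, $\eta=\hat\theta^{\,KP}$ lies in $\IRR(KP|\theta)$. Since $\theta(1)$ is prime to $p$, we get $\eta(1)_p=|KP:KP_\theta|_p=|P:P_\theta|$. Here we use that $P_\theta\lhd P$ and $P\cap K\subset P_\theta$; the latter holds because $K$ stabilizes $\theta$. Lemma~\ref{lem:ind_p_bound} then gives $\chi\in\IRR(G|\eta)$ with
\[\chi(1)_p\leqslant |G:KP|_p\,\eta(1)_p=|P:P_\theta|\leqslant|P:D|\leqslant|P:Q|.\]

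\emph{Step 3: show $p\mid\chi(1)$.} This is the main obstacle, since Lemma~\ref{lem:ind_p_bound} only bounds $\chi(1)_p$ from above. I would argue through the Clifford correspondence over $K$. Write $\chi=\xi^G$ with $\xi\in\IRR(G_\theta|\theta)$, so that $\chi(1)_p=|G:G_\theta|_p\,\xi(1)_p$. If $\chi(1)$ were prime to $p$, then $|G:G_\theta|_p=1$, so some Sylow subgroup $P^x$ of $G$ stabilizes $\theta$ and $\xi$ has $p'$-degree over $\theta$.

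To reach a contradiction I would exploit the fact that $\chi_{KP}$ contains $\eta$, whose degree has $p$-part $|P:P_\theta|>1$, and compare with the $P$-orbit of $\theta$, which has size $|P:P_\theta|$. The aim is to show that the $p$-parts of the degrees of the constituents of $\chi_K$, counted with their orbit multiplicities, force $p\mid\chi(1)$.

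If this direct approach does not go through, the fallback is to change the choice of $\theta$ rather than of $\chi$. One would pick, among the $G$-conjugates of $\theta$ satisfying the hypotheses, one for which $|G:G_\theta|_p$ is maximal. The hypothesis that $P/P_\theta$ is abelian, together with Lemma~\ref{lem:count_key}(b) (the element $n\in K\cap\nr(D)$ with $P^n=R$), would then be used to show that no Sylow $p$-subgroup of $G$ can fix such a $\theta$. That would give $|G:G_\theta|_p>1$, and hence $1<\chi(1)_p\leqslant|P:D|$, which is the desired contradiction.
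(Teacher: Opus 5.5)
Your Steps 1 and 2 are correct. They amount to the first half of the paper's proof. The paper applies Lemma~\ref{lem:ind_p_bound} to $\hat\theta$ inside $T=G_\theta$ and then induces to $G$, which shows directly that $\chi(1)_p=1$ forces $T$ to contain a Sylow $p$-subgroup of $G$; your Clifford-correspondence remark in Step 3 reaches the same point.

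The gap is that this remaining case, $|G:G_\theta|_p=1$, is never handled, and neither of your two routes can handle it.
\begin{itemize}
\item The direct route is a purely Clifford-theoretic claim, and in that generality it is false. If $P^x\subseteq G_\theta$, then $P$ fixes the conjugate $\theta^{x^{-1}}$. So $\chi_{KP}$ may contain $\eta$ together with a $p'$-degree constituent lying over $\theta^{x^{-1}}$, and nothing forces $p\mid\chi(1)$. For instance, take $C_3^3\rtimes S_3$ with $p=2$, $K=C_3^3$, $P=\langle(12)\rangle$ and $\theta=\lambda\times1\times1$ with $\lambda\neq1$. Then $P_\theta=1$, yet every $\chi\in\IRR(G|\theta)$ has degree $3$ and lies over $\eta=\theta^{KP}$.
\item The fallback is vacuous. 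Since $G_{\theta^g}=(G_\theta)^g$, the number $|G:G_\theta|_p$ is the same for all $G$-conjugates of $\theta$, so there is nothing to maximize. Also, Lemma~\ref{lem:count_key}(b) concerns the pair $P,R$ and the section $P/D$, so it gives no mechanism for excluding a Sylow $p$-subgroup inside $G_\theta$.
\end{itemize}

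What is missing is a second appeal to the minimality of $G$, now applied to $\bar{G}=G/K$ and the section $\bar P/\bar{P_\theta}$. In this case the contradicting character lies in $\IRR(G/K)$; it does not lie over $\theta$, since $\theta\neq 1_K$. The argument runs as follows.
\begin{enumerate}
\item Take $S\in\syl(G_\theta)$ containing $P_\theta=P\cap G_\theta$. By the reduction, $S$ is Sylow in $G$, so $S\neq P_\theta$ and hence $U=\nr[S](P_\theta)>P_\theta$.
\item Since $P\cap K\subseteq P_\theta\subseteq S$ and both $P\cap K$ and $S\cap K$ are Sylow in $K$, you get $P\cap K=S\cap K=U\cap K$. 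Therefore $U$ is a Sylow $p$-subgroup of $KU$.
\item Suppose $UK\subseteq PK$. Then $V=P\cap KU$ is also Sylow in $KU$, so $V=U^k$ for some $k\in K$. As $U\subseteq G_\theta$ and $k$ fixes $\theta$, this gives $V\subseteq P\cap G_\theta=P_\theta$. This contradicts $|V|=|U|>|P_\theta|$.
\item Hence $\bar U\not\subseteq\bar P$. So $\bar P/\bar{P_\theta}$ is an abelian, nonnormal Sylow $p$-subgroup of $\nr[\bar{G}](\bar{P_\theta})/\bar{P_\theta}$.
\item Minimality then yields $\chi\in\IRR(G/K)$ with $1<\chi(1)_p\leqslant|P:P_\theta|\leqslant|P:D|$, which is the required contradiction.
\end{enumerate}
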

 
\begin{proof}
Suppose there exists such a character $\theta\in \IRR_{p'}(K)$. Write $T=G_\theta$, and let $S$ be a Sylow $p$-subgroup of $T$ such that $P_\theta = P\cap T\subset S$. We first claim that $\theta$ extends to some $\hat{\theta}\in \IRR(KP_\theta)$. If $K$ is abelian, then it is an elementary abelian $q$-group for some prime $q$. By Lemma~\ref{lem:count_key}(a), $q\neq p$. In this case, $\theta$ is linear and $K\cap P_\theta=1$. Thus $\theta$ extends to $\hat{\theta}\in \IRR(KP_\theta)$.  If $K$ is nonabelian, then $K'=K$ and $K$ is perfect. By Lemma~\ref{lem:perf_ext}, $\theta$ extends to $\hat{\theta}\in \IRR(KP_\theta)$, and the claim follows.

We next show that $S$ is a Sylow $p$-subgroup of $G$. Since $P_\theta\cap K=(P\cap T)\cap K=P\cap K\in \syl(K)$, the index 
\[|KP_\theta:P_\theta| = |K:K\cap P_\theta| = |K:K\cap P|\]
is not divisible by $p$, and thus $|KP_\theta|_p = |P_\theta|$. Applying Lemma~\ref{lem:ind_p_bound} to $\hat{\theta}$, we obtain $\psi\in \IRR(T|\hat{\theta})\subset \IRR(T|\theta)$ such that
\[\psi(1)_p \leqslant |T:KP_\theta|_p \hat{\theta}(1)_p = \frac{|T|_p}{|KP_\theta|_p} = \frac{|S|}{|P_\theta|}\, .\]
By the Clifford correspondence, $\chi = \psi^G$ is an irreducible character of $G$ satisfying
\[\chi(1)_p = |G:T|_p \psi(1)_p\leqslant \frac{|P|}{|S|}\frac{|S|}{|P_\theta|} =|P:P_\theta|\leqslant |P:D|\, .\]
Since $G$ is a counterexample to Theorem~\ref{thm:key}, we must have $|G:T|_p =\psi(1)_p=1$. Thus $S$ is a Sylow $p$-subgroup of $G$.

Write $\bar{G}=G/K$ and use the bar convention. We work to show that $\bar{P}/\bar{P_\theta}$ is nonnormal in $\nr[\bar{G}](\bar{P_\theta})/\bar{P_\theta}$. If $S=P_\theta$, then $P_\theta$ would be a Sylow $p$-subgroup of $G$, and thus $P=P_\theta$ and this is not possible.  We have then $U=\nr[S](P_\theta)\g P_\theta$. To prove that $\bar{P}/\bar{P_\theta}$ is a nonnormal Sylow $p$-subgroup of $\nr[\bar{G}](\bar{P_\theta})/\bar{P_\theta}$, it suffices to show that $\bar{P}$ does not contain every $p$-subgroup of $\nr[\bar{G}](\bar{P_\theta})$. It then suffices to show that $\bar{U}\not\subset \bar{P}$. Assume for a contradiction that $\bar{U}\subset \bar{P}$, or equivalently, that $UK\subset PK$. Let $V=P\cap KU$. Since 
\[|KU:V| = |KU:KU\cap P| = |KUP:P| = |KP:P|\]
is a $p'$-number, $V$ is a Sylow $p$-subgroup of $KU$. We claim that $U$ is also a Sylow $p$-subgroup of $KU$. Observe that both $P\cap K$ and $S\cap K$ are Sylow $p$-subgroups of $K$. Since $P\cap T=P_\theta\subset S$, we have $P\cap K=(P\cap T)\cap K\subset S\cap K$, and thus
\[P\cap K=P_\theta\cap K=S\cap K\, .\]
It follows that
\[S\cap K=P_\theta \cap K\subset U\cap K\subset S\cap K,\]
and equality holds throughout. Hence 
\[|KU:U| = |K:K\cap U| = |K:K\cap S|\]
is not divisible by $p$, and $U$ is a Sylow $p$-subgroup of $KU$, as claimed. Thus there exists $k\in K$ with $V=U^k$. Since both $U=\nr[S](P_\theta)\subset T=G_\theta$ and $k\in K$ stabilize $\theta\in \IRR(K)$, we deduce that $V\subset T\cap P\subset P_\theta$. This is a contradiction since $|V|=|U|=|\nr[S](P_\theta)|\g |P_\theta|$.

We conclude that $\bar{P}/\bar{P_\theta}$ is an abelian, nonnormal Sylow $p$-subgroup of $\nr[\bar{G}](\bar{P_\theta})/\bar{P_\theta}$. By the minimality of $|G|\g |\bar{G}|$, there exists $\chi\in \IRR(\bar{G})$ with 
\[1\ls \chi(1)_p \leqslant |\bar{P}:\bar{P_\theta}| \leqslant |P:P_\theta|\leqslant |P:D|,\]
which contradicts the choice of $G$. 
\end{proof}

\begin{lem}\label{lem:p'_min_nor}
Let $G$ be a counterexample of minimal order to Theorem~\ref{thm:key}. Assume the notation of Lemmas~\ref{lem:section} and~\ref{lem:count_key}. If $K$ is the unique minimal normal subgroup of $G$, then $p$ divides $|K|$. In particular, $K$ is nonabelian. 
\end{lem}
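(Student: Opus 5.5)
Suppose, for a contradiction, that $K$ is a $p'$-group. By Lemma~\ref{lem:count_key}(b) applied to $N=K$, there is $n\in K\cap \nr(D)$ with $P^n=R$. The plan is to use the coprime action of $P$ on $K$ to show that $P$ normalizes a subgroup of $K$ that still moves $P$ to $R$. Then I will produce a character $\theta\in\IRR(K)$ that violates Lemma~\ref{lem:ruling_out}. Note that every $\theta\in\IRR(K)$ automatically lies in $\IRR_{p'}(K)$.

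First reduce to a $P$-invariant configuration. Set $C=\cn[K](D)$; note that $\nr[K](D)=\cn[K](D)$, since $[D,\nr[K](D)]\subset D\cap K=1$. So $n\in C$, and $C$ is a $p'$-group normalized by $P$ (as $D\lhd P$). Now $P/D$ acts coprimely on $C$, and $P^n=R\neq P$ forces $n\notin \nr[C](P)=\cn[C](P)$. Hence $P$ acts nontrivially on $C$, and more precisely $n$ witnesses that $[C,P]\neq 1$: indeed $R=P^n$ and $P$ differ, so some $y\in P$ has $[y,n]\notin P$.

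Next, I would like $\theta\in\IRR(K)$ whose stabilizer $P_\theta$ is proper in $P$, contains $D$, and satisfies $P/P_\theta$ abelian. Then $|P:P_\theta|\le|P:D|$, and Lemma~\ref{lem:ruling_out} yields a contradiction. Since $P/D$ is abelian, any $P_\theta\supseteq D$ is automatically normal in $P$ with abelian quotient. So what I need is a $D$-invariant $\theta\in\IRR(K)$ that is not $P$-invariant.

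By Glauberman's correspondence (a coprime action, since $K$ is a $p'$-group), the $D$-invariant characters of $K$ correspond bijectively to $\IRR(\cn[K](D))=\IRR(C)$, compatibly with the action of $\nr(D)\supseteq P$. Thus it suffices to find $\lambda\in\IRR(C)$ not fixed by $P$. If every irreducible character of $C$ were $P$-invariant, then by Brauer's permutation lemma (via coprimality, the fixed classes and fixed characters are equinumerous) $P$ would fix every conjugacy class of $C$. Under coprime action this forces $P$ to act trivially on $C$: $P$-invariant classes contain $P$-fixed elements, so $C=\bigcup_c \cn[C](P)^c$, and hence $\cn[C](P)=C$ since a proper subgroup's conjugates never cover the group. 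But this contradicts $[C,P]\neq1$. So such a $\lambda$ exists, and its Glauberman correspondent $\theta$ gives the contradiction. Therefore $p$ divides $|K|$.

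Finally, if $K$ were abelian it would be an elementary abelian $q$-group, and $q\neq p$ by Lemma~\ref{lem:count_key}(a), contradicting $p\mid |K|$. So $K$ is nonabelian. The main obstacle is the step showing $n\in\cn[K](D)$ and the careful use of Glauberman's correspondence with $D$ only normal in $P$ (its equivariance for $\nr(D)$). If that proves awkward, I would instead fall back on the more elementary Brauer permutation lemma argument applied to $D$-orbits.
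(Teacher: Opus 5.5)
Your proof is correct and follows the paper's argument. You assume $K$ is a $p'$-group, take $n\in\nr[K](D)=\cn[K](D)$ with $n\notin\cn[K](P)$, produce a $D$-invariant but not $P$-invariant $\theta\in\IRR(K)$, and contradict Lemma~\ref{lem:ruling_out} using $D\subseteq P_\theta<P$ with $P/D$ abelian. The only difference is that the paper gets $\theta$ by citing Lemma~2.2 of \cite{N93} (if every $D$-invariant irreducible character of $K$ is $P$-invariant, then $\cn[K](D)=\cn[K](P)$), whereas you prove that fact inline via the $\nr(D)$-equivariant Glauberman correspondence, Brauer's permutation lemma, and Glauberman's lemma on fixed points in invariant classes.
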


\begin{proof}
Suppose that $K$ is a $p'$-group. Since a normal $p'$-subgroup normalizes a $p$-subgroup if and only if it centralizes it, observe that $\nr[K](D)=\cn[K](D)$ and $\nr[K](P)=\cn[K](P)$. By Lemma~\ref{lem:count_key}, let $k\in \nr[K](D)=\cn[K](D)$ with $D=P\cap P^k\ls P$. In particular, $k\notin \cn[K](P)$.

Suppose first that every member of $\IRR_D(K)$ (the set of $D$-invariant $\theta\in \IRR(K)$) is $P$-invariant. By Lemma~2.2 of \cite{N93}, $\cn[K](D) = \cn[K](P)$ contradicting that $k\in \cn[K](D)\sm \cn[K](P)$.

Thus there exists $\theta\in \IRR_D(K)$ that is not $P$-invariant. Then $D\subset P_\theta\ls P$. Since $P/D$ is abelian, we have $P_\theta\lhd P$ and $P/P_\theta$ is abelian. This, however, contradicts Lemma~\ref{lem:ruling_out} and we conclude that $p$ divides $|K|$.

Finally, if $K$ is abelian, then $K$ is an elementary abelian $q$-group for a prime $q$. By Lemma~\ref{lem:count_key}(a), $q\neq p$ and this contradicts the previous paragraph. Thus $K$ is nonabelian. 
\end{proof}

We now fix some notation for the remainder of this section and make a few observations. Let $G$ be a counterexample of minimal order to Theorem~\ref{thm:key}. By Lemma~\ref{lem:count_key}, $G$ has a unique minimal normal subgroup $K$ and, by Lemma~\ref{lem:p'_min_nor}, $K=S_1\times \cdots \times S_t$, where the $S_i$ are isomorphic nonabelian simple groups transitively permuted by $G$.

Write $P_i = P\cap S_i$ and $D_i = D\cap S_i$ for $1\leqslant i\leqslant t$. Since $P\in \syl(G)$ and $S_i\lhd K\lhd G$, observe that $P\cap K = P_1\times \cdots\times P_t$. By Lemma~\ref{lem:count_key}, we may choose $k\in K\cap \nr(D)$ such that $P\cap P^k = D$. Then 
\[ P_i \cap P_i^k = (P\cap S_i)\cap (P\cap S_i)^k  = P\cap P^k\cap S_i = D\cap S_i =D_i,\]
and
\[D\cap K = (P\cap P^k)\cap K=(P\cap K)\cap (P\cap K)^k = \prod_{i=1}^t (P_i\cap P_i^k) = \prod_{i=1}^t D_i\, .\]
Furthermore, $D_i=D\cap S_i\lhd P\cap S_i= P_i$, and $P_i \cap D=P_i \cap (D_1 \cdots D_t)=D_i$. Note also that 
$P_i$ normalizes $D$ and
\[ P_i/D_i \cong P_iD/D\subset P/D\]
is abelian, since it is isomorphic to a subgroup of the abelian group $P/D$. Finally, observe that
\[|PK:DK| = |P:(P\cap DK)| = |P:(P\cap K)D| = \frac{|P| |D\cap K|}{|D| |P\cap K|}\]
and
\[ \prod_{i=1}^t |P_i:D_i| = \frac{|P\cap K|}{|D\cap K|},\]
which yields
\[|P:D| =|PK:DK|\prod_{i=1}^t |P_i:D_i|\, .\]

Next, we gather information on the $p$-subgroups $D_i$ and $P_i$.

\begin{lem}\label{lem:active_factor}
Let $G$ be a counterexample of minimal order to Theorem~\ref{thm:key}. Assume the notation of the previous 
paragraphs and lemmas. Then there exists a factor $S_i$ of $K$ such that $D_i\ls P_i$. 
\end{lem}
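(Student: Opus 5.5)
Suppose, for a contradiction, that $D_i=P_i$ for every $1\leqslant i\leqslant t$. Then $P\cap K=D\cap K$, so $P\cap K\subset D$. I will argue through the quotient $\bar{G}=G/K$, mirroring the argument in Lemma~\ref{lem:count_key}(b) and the final step of Lemma~\ref{lem:ruling_out}.

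Recall from Lemma~\ref{lem:count_key}(b) that there is $k\in K\cap\nr(D)$ with $P^k=R$ and $P\cap P^k=D$. Since $P\cap K\subset D\subset P^k$, the subgroup $P\cap K$ is a common Sylow $p$-subgroup of $K$ contained in both $P$ and $R=P^k$. I would instead exploit the identity established just before the lemma,
\[|P:D| = |PK:DK|\prod_{i=1}^t |P_i:D_i|.\]
Under our assumption this gives $|P:D|=|PK:DK|$, that is, $D K\cap P=D$. But Lemma~\ref{lem:count_key}(b) gives $PK=RK$. Now $R\subset DK\cdot$? Since $R=P^k$ with $k\in K$, we have $RK=PK$. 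The key point is then to compare $P$ and $R$ inside $PK$. Because $D\supseteq P\cap K$, the group $D$ contains the Sylow $p$-subgroup $P\cap K=R\cap K$ of $K$.

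Consider $x\in P$. Then $x^k\in R$, and $x^k=x[x,k]$ with $[x,k]\in K$. Since $k$ normalizes $D$ and $D\lhd P$ with $P/D$ abelian, I want to show that $[x,k]\in D$. Working in $\nr(D)/D$, the elements $\bar x$ and $\bar x^{\bar k}$ both lie in $p$-subgroups, and $[x,k]\in K\cap \nr(D)$. The key step is to show $[P,k]\subset K\cap P$. Write $N=\nr[K](D)$, which is normalized by $P$; it has Sylow $p$-subgroup $D\cap K=P\cap K$ (a Sylow subgroup of $K$ contained in $N$). Hence $(P\cap K)$ is a normal Sylow of $N$? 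Indeed $P\cap K\subset D$, and $D\cap N\lhd N$ is a $p$-group containing a Sylow $p$-subgroup of $N$. So $D\cap K$ is the unique Sylow $p$-subgroup of $N$, and $N/(D\cap K)$ is a $p'$-group. Then in $\nr(D)/D$, the image of $N$ is a normal $p'$-subgroup normalized by the $p$-group $P/D$, so $[P,k]\subset N\cap P D\cdot$; more precisely, $[\bar P,\bar k]\subset \bar N\cap \bar P=1$ since $\bar N$ is a normal $p'$-subgroup of $\bar P\bar N$ and normalizing equals centralizing. Therefore $\bar k$ centralizes $P/D$, so $P^k D=PD=P$, giving $R=P^k\subset P$, hence $R=P$, a contradiction.

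The main obstacle is verifying that $\nr[K](D)D/D$ is a $p'$-group normalized by $P$, i.e. that $D\cap K$ really is a Sylow $p$-subgroup of $\nr[K](D)$; this uses exactly $P\cap K\subset D$, which is where the assumption $D_i=P_i$ for all $i$ enters.
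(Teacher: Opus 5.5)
\textbf{There is a genuine gap.} It sits at the step where you conclude $[\bar{P},\bar{k}]\subset \bar{N}\cap\bar{P}=1$. What you correctly establish is that $\bar{N}=\nr[K](D)D/D$ is a $p'$-group normalized by $\bar{P}=P/D$. Incidentally, that is the easy part, not the main obstacle. It gives $\bar{N}\lhd\bar{P}\bar{N}$, and hence only $[\bar{P},\bar{k}]\subset\bar{N}$.

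The principle that a normal $p'$-subgroup normalizes a $p$-subgroup if and only if it centralizes it does not apply here. To get $[\bar{P},\bar{k}]\subset\bar{P}$ as well, you need $\bar{k}$ to normalize $\bar{P}$, and nothing gives you that. In fact $\bar{P}^{\bar{k}}=\bar{R}\neq\bar{P}$ by Lemma~\ref{lem:section}, so this step assumes the negation of what you want to contradict.

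The configuration you are trying to exclude really occurs. Take $G=S_5$, $p=2$, $K=A_5$, $P\cong D_8$ inside $S_4$, $D=P\cap K\cong C_2\times C_2$, and $k$ a $3$-cycle in $\nr[K](D)\cong A_4$. Then:
\begin{itemize}
\item $P\cap K=D$, $D\lhd P,P^k$, $P\cap P^k=D$, and $P/D$ is abelian;
\item $\op_p(G)=1$, $G=\op^{p'}(G)$, and $K$ is the unique minimal normal subgroup;
\item $\nr(D)/D\cong S_3$, in which $\bar{k}$ of order $3$ does not centralize $\bar{P}$ of order $2$.
\end{itemize}
Your argument never uses that $G$ is a counterexample; no character degrees enter. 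So no purely local argument of this kind can succeed.

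\textbf{What is missing} is to apply the minimality of $G$ to a proper section and then transfer back to $G$.
\begin{itemize}
\item Since $k$ normalizes both $D$ and $K$, it normalizes $P_0=D\cap K=P\cap K$, which is nontrivial by Lemma~\ref{lem:p'_min_nor}.
\item As $P_0\lhd P$, also $P_0\lhd P^k=R$. So $P,R\subset N=\nr(P_0)$, and $P/D$ is abelian and nonnormal in $\nr[N](D)/D$.
\item Minimality applied to $N/P_0$ yields $\xi\in\IRR(N/P_0)$ with $1<\xi(1)_p\leqslant|P:D|$.
\item The real work is returning to $G$. This is Corollary~\ref{cor:mckay_p_part}, which rests on the McKay bijection with isomorphic character triples (Theorem~\ref{thm:mckay}). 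It produces $\chi\in\IRR(G)$ with $\chi(1)_p=\xi(1)_p$, the required contradiction.
\end{itemize}
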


\begin{proof}
Assume that $D_i = P_i$ for all $1\leqslant i\leqslant t$. Then $P_0=P\cap K = D\cap K$ and, since $p$ divides $|K|$ by Lemma~\ref{lem:p'_min_nor}, $P_0\g 1$. Since the element $k$ lies in $K\cap \nr(D)$, we have $P_0^k = (D\cap K)^k = P_0$. Write $N=\nr(P_0)$. Since $P_0=P\cap K\lhd P$, we have that $P_0 = P_0^k$ is normal in $P^k = R$. 
So $P/D \ne R/D$ are Sylow $p$-subgroups of $\nr[N](D)/D$. Write $\bar{N} = N/P_0$ and use the bar notation. Since $\bar{P}/\bar{D}$ is abelian and $\bar{P}/\bar{D}$ is nonnormal in $\nr[\bar{N}](\bar{D})/\bar{D} = \bar{\nr[N](D)}/\bar{D}$, by the minimality of $|G|\g |\bar{N}|$ there exists $\xi\in \IRR(N/P_0)$ such that
\[1\ls \xi(1)_p\leqslant |\bar{P}:\bar{D}|\leqslant |P:D|\, .\]
Now, by Corollary~\ref{cor:mckay_p_part} there exists $\chi\in \IRR(G)$ with $\chi(1)_p = \xi(1)_p$, and this contradicts the choice of $G$ as a counterexample. 
\end{proof}

If $G=A\times B$ and $\alpha\in \IRR(A)$, we will slightly abuse notation and identify $\alpha$ with its unique extension $\hat{\alpha}$ to $G$ such that $B\subset \ker(\hat{\alpha})$.

\begin{lem}\label{lem:inv_factor}
Let $G$ be a counterexample of minimal order to Theorem~\ref{thm:key} and continue to keep the notation of the previous paragraphs and lemmas. If $D_i\ls P_i$, then $S_i$ is $P$-invariant. 
\end{lem}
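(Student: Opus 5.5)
The plan is to argue by contradiction, using only that $P/D$ is abelian, so that $[P,P]\subset D$, together with the decomposition $D\cap K=\prod_{l=1}^t D_l$ recorded in the paragraphs preceding Lemma~\ref{lem:active_factor}. No character theory should be needed here, and in particular I do not expect to invoke Lemma~\ref{lem:ruling_out}.

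Suppose $D_i\ls P_i$ but $S_i$ is not $P$-invariant. Then there is $x\in P$ with $S_i^x=S_j$ for some $j\neq i$. Fix an arbitrary $y\in P_i=P\cap S_i$.

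First, the commutator $[y,x]=y^{-1}y^x$ lies in $[P,P]\subset D$, because $P/D$ is abelian.

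Second, $y^{-1}\in S_i$ and $y^x\in S_i^x=S_j$, so $[y,x]\in S_iS_j=S_i\times S_j\subset K$.

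Combining these, $[y,x]\in D\cap K=D_1\times\cdots\times D_t$. Every element of $K=S_1\times\cdots\times S_t$ has unique components in the factors $S_l$. The element $[y,x]$ has $S_i$-component $y^{-1}$ and $S_j$-component $y^x$, with all other components trivial. On the other hand, as an element of $\prod_l D_l$ its $S_i$-component lies in $D_i$. Hence $y^{-1}\in D_i$, and so $y\in D_i$.

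Since $y\in P_i$ was arbitrary, $P_i\subset D_i$, so $P_i=D_i$. This contradicts the hypothesis $D_i\ls P_i$, and therefore $S_i$ is $P$-invariant.

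The only step needing care is the uniqueness of components. We need $D\cap K$ to be exactly $\prod_l D_l$ with $D_l=D\cap S_l$, which was established earlier via $D=P\cap P^k$ with $k\in K$. We also need $j\neq i$, so that the $S_i$- and $S_j$-components of $[y,x]$ are separate coordinates. Both facts are already available from the setup, so I do not anticipate a genuine obstacle.
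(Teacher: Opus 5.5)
Your proof is correct, and it takes a genuinely different and much more elementary route than the paper.

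The paper argues through characters. It lets $P$ act on the set $\mathcal{A}$ of factors with $D_i<P_i$. Using the Frattini argument and $P'\subseteq\Phi(P)$, it finds a $P'$-orbit $\Lambda$ properly contained in a nontrivial $P$-orbit. From an $\AU(S_i)$-orbit of nontrivial $p'$-degree characters of length prime to $p$ (an external result on simple groups), it builds a $p'$-degree $\theta\in\IRR(K)$. The stabilizer of $\theta$ in $P$ is the setwise stabilizer $T\supseteq P'$ of $\Lambda$. The paper then contradicts Lemma~\ref{lem:ruling_out} via $|P:T|\le|\mathcal{A}|\le\prod_{S_i\in\mathcal{A}}|P_i:D_i|\le|P:D|$.

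In that argument the decomposition $D\cap K=\prod_l D_l$ enters only through the index formula $|P:D|=|PK:DK|\prod_l|P_l:D_l|$. You instead use the decomposition coordinatewise. The single commutator $[y,x]=y^{-1}y^x\in P'\cap S_iS_j\subseteq D\cap K$ forces its $S_i$-coordinate $y^{-1}$ into $D_i$.

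What your version buys:
\begin{itemize}
\item It needs no character theory and no result on simple groups.
\item It makes no appeal to Lemma~\ref{lem:ruling_out}, nor to the minimality of $G$ beyond the structural facts fixed before Lemma~\ref{lem:active_factor}.
\item It shows the conclusion holds for any $D\supseteq P'$ with $D\cap K=\prod_l(D\cap S_l)$, whether or not $G$ is a counterexample.
\end{itemize}
The paper's proof follows the same template used elsewhere in that section: produce a suitable $\theta$ and invoke Lemma~\ref{lem:ruling_out}. For this particular lemma, your argument shows that machinery is unnecessary.
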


\begin{proof}
Since $\{S_i\}_{i=1}^t$ is the set of minimal normal subgroups of $K$, $P$ acts on this set by conjugation. Therefore, $P$ acts on $\ce{A} = \{S_i\,|\, D_i\ls P_i\}$, since $D_i=D\cap K$ and $P_i=P\cap K$ and $D \lhd P$. Suppose $\Delta\subset \ce{A}$ is a nontrivial $P$-orbit. If the derived subgroup $P'$ acts transitively on $\Delta$ and $S_i\in \Delta$, the Frattini argument yields $P=P'\nr[P](S_i)$. Since $P'\subset \Phi(P)$, we obtain
\[P =  \nr[P](S_i) \]
contradicting that $\Delta$ is a nontrivial $P$-orbit. Thus there exists a $P'$-orbit $\Lambda\subsetneq \Delta$ containing $S_i\in \Lambda$. Let $T=P_\Lambda$ be the setwise stabilizer of $\Lambda$ in $P$. Then $P'\subset T\ls P$. In particular, $T\lhd P$ and $P/T$ is abelian.

Now, by \cite[Lemma~2.2]{H17} there exists an $\AU(S_i)$-orbit $\OR\subset \IRR(S_i)$ of non-trivial $p'$-degree characters whose length is prime to $p$. Since $\nr[T](S_i)$ is a $p$-group acting on $\OR$, there exists an $\nr[T](S_i)$-invariant character $\alpha\in \OR$. Let $X$ be a right transversal for $\nr[T](S_i)$ in $T$, so that $\prod_{x\in X} \alpha^x$ is a $T$-invariant irreducible character of $\prod_{S_j\in \Lambda} S_j$. Then
\[\theta = \prod_{x\in X} \alpha^x \prod_{S_j\notin \Lambda} 1_{S_j}\]
is a $T$-invariant irreducible character of $K$ of $p'$-degree. Moreover, any element $y\in P$ fixing $\theta$ must stabilize $\Lambda$. Thus $T=P_\theta$. Since $|P:T|=|P:P_\Lambda|$ is the number of translates of $\Lambda$ in $ \Delta$, we have
\[|P:T|\leqslant |\Delta| \leqslant |\ce{A}|\leqslant p^{|\ce{A}|} \leqslant \prod_{S_i\in \ce{A}} |P_i:D_i|\leqslant |P:D|,\]
contradicting Lemma~\ref{lem:ruling_out}. 
\end{proof}
 
The next result reduces Theorem~\ref{thm:key} to a question on almost simple groups.

\begin{thm}\label{thm:reduction}
Let $G$ be a counterexample of minimal order to Theorem~\ref{thm:key}. Let $S_i$ be a factor of $K$ such that $D_i\ls P_i$. Then $A=\nr(S_i)/\cn(S_i)$ is an almost simple group with socle $S_i\cn(S_i)/\cn(S_i)$. Furthermore, there exist distinct Sylow $p$-subgroups $U$ and $V$ of $A$ such that the following hold.
\begin{enumerate}[(a)]
\item There exists $a\in S_i\cn(S_i)/\cn(S_i)$ such that $V=U^a$.
\item $E=U\cap V$ is normal in both $U$ and $V$. 
\item $|U:E|\leqslant |P:D|$. 
\end{enumerate}
\end{thm}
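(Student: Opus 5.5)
By Lemma~\ref{lem:inv_factor}, $S_i$ is $P$-invariant, so $P\subset \nr(S_i)$. I plan to take $U$ and $V$ to be images of $P$ and $P^k$ in $A$, with $k\in K\cap\nr(D)$ the element fixed before Lemma~\ref{lem:active_factor}, which satisfies $P\cap P^k=D$. The image $U$ is a Sylow $p$-subgroup of $A$ because $P$ is Sylow in $G$ and lies in $\nr(S_i)$.

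First, the almost simple claim. Write $N=\nr(S_i)$ and $C=\cn(S_i)$. Since $S_i$ is nonabelian simple, $S_iC/C\cong S_i$, and $A$ embeds into $\AU(S_i)$ via conjugation, containing $\mathrm{Inn}(S_i)$. Hence $A$ is almost simple with socle $S_iC/C$.

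Next, the choice of $V$. Write $k=s_1\cdots s_t$ with $s_j\in S_j$. Since $K\subset N$, $k$ normalizes $S_i$. Moreover every $s_j$ with $j\neq i$ lies in $C$. Hence the image of $k$ in $A$ equals $a=s_iC\in S_iC/C$. Now put $U=PC/C$ and $V=P^kC/C=U^a$; this gives (a).

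Then (b) and distinctness. Here I would intersect with $S_i$ rather than work modulo $C$. We have $P\cap S_i=P_i$ and $P^k\cap S_i=P_i^{s_i}$, because conjugation by $s_j$ for $j\neq i$ fixes $S_i$ elementwise. Also $P_i\cap P_i^{s_i}=D_i<P_i$, so $s_i\notin\nr(P_i)$.

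The obstacle is that $PC\cap P^kC$ could be larger than $DC$. So the plan is to compare $U,V$ with $D$ via the quotient. Since $D\lhd P$ and $D\lhd P^k$, the image $DC/C$ is normal in both $U$ and $V$, and $U/(DC/C)$ is abelian as a quotient of $P/D$.

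If $U=V$, then $a$ normalizes $U$. Then $s_i$ normalizes $PC\cap S_i$. Because $U\cap(S_iC/C)$ is Sylow in $S_iC/C$, the group $PC\cap S_iC$ equals $P_iC$, using $P\cap S_iC=P_i\times(P\cap C)$-type arguments. Hence $s_i$ normalizes $P_iC\cap S_i=P_i$, which is a contradiction. So $U\neq V$.

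For (b), set $E=U\cap V$. Since $DC/C\subset E$ and $U/(DC/C)$ is abelian, $E$ is normal in $U$, and similarly in $V$. Both quotients are abelian, so $E\lhd U,V$.

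For (c), $|U:E|\leqslant|U:DC/C|=|P:P\cap DC|\leqslant|P:D|$.

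The step I expect to be delicate is the identification $PC\cap S_iC=P_iC$, that is, that the Sylow intersection behaves well modulo $C$. I would handle it by noting that $S_iC/C\lhd A$ forces $U\cap S_iC/C$ to be Sylow in $S_iC/C\cong S_i$. Its order is $|P_i|$, and it contains $P_iC/C\cong P_i$, so the two coincide.
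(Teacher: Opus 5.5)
Your proposal is correct and follows essentially the same route as the paper. Like the paper, you take $U,V$ to be the images of $P$ and $P^k$ in $A$, rule out $U=V$ by intersecting back with $S_i$ to reach $P_i=P_i^{k}$ (your version is that $s_i$ normalizes $P_i$), and get (b) and (c) from $DC/C\subseteq E$ together with $U/(DC/C)$ and $V/(DC/C)$ being abelian quotients of $P/D$ and $P^k/D$. The only difference is minor: you obtain $PC\cap S_iC=P_iC$ by a Sylow order count inside $A$, whereas the paper uses $P\cap S_iC=(P\cap S_i)(P\cap C)$.
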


\begin{proof}
Write $X=\nr(S_i)$ and $C=\cn(S_i)$. By Lemma~\ref{lem:inv_factor}, $P\subset X$. Since $K\subset X$, we also have $R=P^k\subset X$. Write $\bar{X} = X/C = A$, and note that the element $a=\bar{k}$ lies in $\bar{S_i} = S_iC/C$. 

Since $A$ is a subgroup of $\AU(S_i)$ containing the nonabelian simple group $\bar{S_i}$ (which is isomorphic to $S_i$), the group $A$ is almost simple with socle $\bar{S_i}=S_iC/C$. 

Write $ V=\bar{P}$ and $U=\bar{R}$, so that $U$ and $V$ are Sylow $p$-subgroups of $A$ with $U=V^a$. We now show that $U=\bar{P}$ and $V=\bar{P^k}$ are distinct Sylow $p$-subgroups. Suppose for a contradiction that $PC = P^k C$. Observe that
\[PC\cap (S_iC) = (P\cap (S_iC))C = (P\cap S_i)(P\cap C)C = (P\cap S_i)C,\]
where the second equality holds because $P\in \syl(X)$, and $S_iC=S_i\times C\lhd X$. Similarly, $P^k C\cap S_i C=(P^k\cap S_i)C$. Since we are assuming that $PC= P^kC$, it follows that 
\[(P\cap S_i)C = (P^k\cap S_i)C\, .\]
Intersecting both sides of the equation with $S_i$ yields
\[(P\cap S_i)C\cap S_i = (P^k\cap S_i)C \cap S_i\, .\]
Since
\[(P\cap S_i)C\cap S_i = (P\cap S_i)(C\cap S_i) = P\cap S_i,\]
and, similarly, $(P^k\cap S_i)C\cap S_i = P^k\cap S_i$, we conclude that $P_i = P\cap S_i = P^k\cap S_i = P_i^k$. This contradicts the fact that $P_i\cap P_i^k = D_i\ls P_i$. Thus $V\neq U$. 

Since $D\lhd  P,R$, we have that  $E_0=\bar{D}$ is normal in both $\bar{P} = V$ and $\bar{R} = U$, and $V/E_0=\bar{P}/\bar{D}$ is abelian. Since $E=U\cap V\supset E_0$, it follows that $E$ is normal in both $U$ and $V$. Finally
\[|U:E|=|V:E|\leqslant |V:E_0| = |\bar{P}:\bar{D}| \leqslant |P:D|,\]
and this completes the proof. 
\end{proof}

\section{Almost simple groups}

The following lemmas follow essentially from part of the work of \cite{BM,MMR}, in which height-one characters are found in principal blocks of many decorated simple groups. (See also \cite{MS}, where arbitrary blocks are considered.) As usual, given a finite group $X$ and a fixed prime $p$, we let $B_0(X)$ denote the principal $p$-block of $X$.

\begin{lem}\label{lem:almost_simple_p}
Let $S$ be a finite nonabelian simple group, $p$ a prime dividing $|S|$, and $S< A\subset \AU(S)$, with $A=\op^{p'}(A)$. Then there exists $\chi\in \IRR(B_0(A))$ such that $S\not\subset \ker(\chi)$ and $\chi(1)_p = p$. 
\end{lem}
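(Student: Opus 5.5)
The plan is to reduce to known results on height-one characters in principal blocks of almost simple groups, namely \cite{BM,MMR} (and \cite{MS}). First I pin down the hypotheses. Since $A=\op^{p'}(A)$ and $S<A$, the group $A/S$ is generated by its $p$-elements and is nontrivial, so $p$ divides $|A:S|$.

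The main reduction uses Clifford theory over $S$. It suffices to find $\theta\in\IRR(B_0(S))$, $\theta\neq 1_S$, such that some $\chi\in\IRR(A|\theta)$ lies in $B_0(A)$ and has $\chi(1)_p=p$. Any such $\chi$ automatically satisfies $S\not\subset\ker(\chi)$. Recall that when $A/S$ is a $p$-group, every character of $A$ lying over a character in $B_0(S)$ lies in $B_0(A)$. In general, the blocks of $A$ covering $B_0(S)$ include $B_0(A)$, and since $A=\op^{p'}(A)$ I will use the standard fact that $B_0(A)$ is the unique block covering $B_0(S)$ in that situation. This follows from \cite[Corollary 9.6]{CTN}-type arguments together with the Alperin--Dade correspondence for $p'$-extensions, applied after passing to $A/\op^{p}(A)$-pieces.

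I will then use two routes to $\chi(1)_p=p$.
\begin{enumerate}[(i)]
\item Take $\theta\in\IRR_{p'}(B_0(S))$, $\theta\neq1$, whose $A$-orbit has length exactly $p$ times a $p'$-number and which extends to its stabilizer in a $p$-part sense. Then by Clifford correspondence and Gallagher, some $\chi$ over $\theta$ has $\chi(1)_p=p$.
\item Take $\theta\in\IRR(B_0(S))$ of height one that is $A$-invariant and extends to $A$. Then the extension $\chi$ has $\chi(1)_p=p$.
\end{enumerate}
The search for such $\theta$ is exactly what \cite{BM,MMR} carry out. They show that for $S\le A\le\AU(S)$ with $p\mid|A:S|$ there is a height-one character in $B_0(A)$ not containing $S$ in its kernel.

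I will organize the case analysis by the classification. For alternating and sporadic $S$, $|\AU(S):S|\le 2$ (except $\mathfrak{A}_6$), so $p=2$ and $A=\AU(S)$. In this case I take $\theta\in\IRR_{2'}(B_0(S))$ nonrational on an outer class, so that it is not $A$-invariant, giving $\chi=\theta^A$ of degree $2\theta(1)$. Such $\theta$ can be read from the hook combinatorics or from the ATLAS. For Lie type in characteristic $p$, the relevant outer automorphisms are field automorphisms of order $p$ (or diagonal/graph ones for $p=2,3$). Here I use the Steinberg character or unipotent characters moved by graph automorphisms, or Lusztig series $\mathcal{E}(S,s)$ with $s$ moved by the field automorphism. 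For Lie type in non-defining characteristic, I use Lusztig series in $B_0$ (elements $s$ of $p$-power order in the dual) whose classes have $A$-orbits of length $p$. Finding these is the content of \cite{MMR}.

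The main obstacle is the uniformity in the last step. We must guarantee that the orbit or extension yields exactly $p$, and not a higher power, when $A/S$ is a noncyclic $p$-group, which happens for diagonal times field automorphisms. I would first try to choose $\theta$ invariant under all but a cyclic-of-order-$p$ quotient of $A/S$, extending to its stabilizer by Lemma~\ref{lem:perf_ext}-type arguments (using that $p\nmid\theta(1)$ and $S$ is perfect, with an extension along the $p$-part). If this fails, I would cite \cite{MMR} for the remaining exceptions.
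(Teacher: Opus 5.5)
The paper's route is the same as yours at the top level: run through the classification and quote the height-one results of \cite{MMR}, \cite{MS} and \cite{BM}. What is missing is the one check that must come before any of those results can be quoted, namely that $A$ has nonabelian Sylow $p$-subgroups. This is not automatic. If $A$ had abelian Sylow $p$-subgroups, every character of $B_0(A)$ would have height zero by \cite{KM13}, so no $\chi$ with $\chi(1)_p=p$ could exist. Your paraphrase of the literature (``for $S\le A\le \AU(S)$ with $p\mid |A:S|$ there is a height-one character\ldots'') silently assumes it.

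The paper supplies this check as follows:
\begin{enumerate}[(a)]
\item $\op_{p'}(A)=1$: since $p\mid|S|$, $\op_{p'}(A)\cap S$ is a proper normal subgroup of $S$, hence trivial, so $\op_{p'}(A)\subseteq \cn[A](S)=1$.
\item From $S<A=\op^{p'}(A)$ and \cite[Theorem~2.2]{K95}, the Sylow $p$-subgroups of $A$ are nonabelian.
\end{enumerate}
Only then does it cite, case by case:
\begin{enumerate}[(a)]
\item \cite[Theorem~3.5]{MMR} for Lie type, reading off from its proof that the character lies over a nontrivial character of $S$;
\item \cite[Propositions~3.1, 3.2]{MS} for the sporadic groups, the Tits group and $A_6$ (you set $A_6$ aside but never treat it, and do not mention the Tits group);
\item \cite[Theorem~2.1]{BM} for $A=S_n$, where $|A:S|=2$ forces any $\chi$ with $\chi(1)_2=2$ to be nontrivial on $S$.
\end{enumerate}

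The self-contained pieces you sketch do not work as stated. Your ``standard fact'' that $A=\op^{p'}(A)$ makes $B_0(A)$ the unique block covering $B_0(S)$ is false. Take $p=2$, $q$ odd with $3\mid q-1$, $S=\mathrm{PSL}_3(q)$, and $A=\mathrm{PGL}_3(q)\rtimes\langle\gamma\rangle$ with $\gamma$ the transpose-inverse automorphism. Then $A/S\cong S_3$ and $A=\op^{2'}(A)$. Let $\lambda$ generate $\mathbb{F}_q^\times$. The image of $\mathrm{diag}(1,1,\lambda)$ lies outside $S$, since $\lambda$ is not a cube. It also centralizes a Sylow $2$-subgroup of $S$, which sits inside $\{\mathrm{diag}(B,\det B^{-1})\}\cong\mathrm{GL}_2(q)$. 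Hence the linear characters of order $3$ of $\mathrm{PGL}_3(q)/S$ are not in $B_0(\mathrm{PGL}_3(q))$. Consequently the inflation of the degree-$2$ character of $A/S\cong S_3$ lies in a block of $A$ that covers $B_0(S)$ but is not $B_0(A)$. So once $A/S$ is not a $p$-group, routes (i) and (ii) give no control over which block $\chi$ lands in.

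Two further steps fail outright:
\begin{enumerate}[(a)]
\item In defining characteristic the Steinberg character has $p$-defect zero, so it is never in $B_0(S)$.
\item For $S=A_7$ and $p=2$, the only irreducible characters of $A_7$ that are not $S_7$-invariant have degree $10$. So the odd-degree non-invariant $\theta$ you want for alternating groups need not exist.
\end{enumerate}
These sketches should be replaced by the nonabelian-Sylow check followed by the precise citations above.
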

 
\begin{proof}
Observe first that $\op_{p'}(A)=1$. Indeed, since $p$ divides $|S|$, we have that $\op_{p'}(A)\cap S$ is a proper normal subgroup of $S$, and thus $\op_{p'}(A)\cap S = 1$. Therefore, $[\op_{p'}(A),S]\subset S\cap \op_{p'}(A)=1$ and $\op_{p'}(A)\subset \cn[A](S)=1$, as claimed. Now, by \cite[Theorem 2.2]{K95}, the Sylow $p$-subgroups of $A$ are nonabelian since $S<A=\op^{p'}(A)$.

 If $S$ is a group of Lie type, then by \cite[Theorem~3.5]{MMR}, there is a height-one character of $A$ in the principal block, which is further constructed in the proof to lie above a nontrivial character of $S$. If $S$ is sporadic, the Tits group ${}^{2}\operatorname{F}_4(2)'$, or the alternating group $A_6$, then the statement is similarly proven within \cite[Propositions~3.1,~3.2]{MS} (see also the remark after \cite[Proposition~3.2]{MS}). For the remaining alternating groups, our assumptions force $p=2$ and $A$ is a symmetric group, and the statement follows from \cite[Theorem~2.1]{BM}, since $A/S$ is cyclic of size 2, and hence any character of $A$ with $\chi(1)_2=2$ is necessarily nontrivial on $S$. 
\end{proof}

\begin{lem}\label{lem:spor}
Let $S$ be a finite nonabelian simple group  and let $p$ be a prime dividing $|S|$ such that $S$ has nonabelian Sylow $p$-subgroups. Further, assume that either $S$ is a sporadic group, an alternating group, the Tits group ${}^2\operatorname{F}_4(2)'$, or a group of Lie type defined in characteristic distinct from $p$.  Then there exists $\chi\in \IRR(S)$ such that $\chi(1)_p = p$, with the exception of the case $(p,S)=(3,\mathrm{Co}_3)$. 
\end{lem}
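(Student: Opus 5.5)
The plan is a case analysis over the families of simple groups named in the statement. Throughout, the existing literature on height-one characters (\cite{BM, MMR, MS}) and the known character tables do most of the work. Since $S$ is simple, every $\chi\in\IRR(S)$ with $\chi(1)_p=p$ is automatically nontrivial, so we only need to exhibit one such degree. The sporadic groups and the Tits group ${}^2\operatorname{F}_4(2)'$ form a finite list. For each prime $p$ with nonabelian Sylow $p$-subgroups, I would inspect the character degrees (ATLAS/GAP) and find a degree with $p$-part exactly $p$. This check is mechanical, and it should isolate $(p,S)=(3,\mathrm{Co}_3)$ as the only failure. There all nonlinear $3$-parts exceed $3$ (the known exceptional case in \cite{MS}).

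For alternating groups $A_n$, nonabelian Sylow $p$-subgroups force $n\geqslant p^2$ (or $n\geqslant 4$ with $p=2$ and a nonabelian Sylow $2$-subgroup, i.e.\ $n\geqslant 6$). I would use the hook length formula for symmetric groups and restrict to $A_n$. Concretely, choose a partition $\lambda$ of $n$ whose $p$-core tower gives $\chi_\lambda(1)_p=p$; by Macdonald's formula this means the $p$-adic digit sum condition gives exactly one carry. This is available, e.g.\ via the construction in \cite[Theorem~2.1]{BM}, or explicitly via a hook $(n-p, 1^{p})$ adjusted to $n$'s base-$p$ expansion. If $\lambda$ is not self-conjugate, then $\chi_\lambda$ restricts irreducibly to $A_n$ with the same degree. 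For $p=2$ I need to check that some such $\lambda$ is not self-conjugate, or else that halving still leaves $2$-part exactly $2$ by taking a $\lambda$ with $\chi_\lambda(1)_2=4$ that is self-conjugate. Small $n$ (say $n\leqslant 9$) I would check directly.

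For groups of Lie type $G(q)$ in characteristic $\ell\neq p$, I would use Lusztig's parametrization and unipotent characters. Let $d$ be the order of $q$ modulo $p$ (modulo $4$ if $p=2$). Nonabelian Sylow $p$-subgroups mean that the Sylow $\Phi_d$-torus has Weyl-type relative group $W_d$ of order divisible by $p$. Unipotent character degrees are polynomials in $q$, and $\chi(1)_p$ is computed from the cyclotomic factors $\Phi_{dp^j}$ and the $p$-part of constant terms. The aim is to find a unipotent character in the principal $\Phi_d$-Harish-Chandra series corresponding to an irreducible character of $W_d$ of degree $p$ (or a $d$-hook/cohook combinatorial analogue). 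Its degree then has $p$-part $p$ by the degree formula of Malle. Alternatively one can use the height-one principal block characters of \cite[Theorem~3.5]{MMR} for $S$ itself, when $A=S$ has nonabelian Sylow subgroups: these have height exactly one, hence $p$-part $p$ times $|S|_p/|P|$, which is $p$. Invoking \cite{MMR} directly for the case $A=S$ should give most families uniformly.

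The main obstacle is the Lie type case with $p=2$ (and small $p$ dividing $q\pm1$ in exceptional types), where $W_d$ is large and the cyclotomic bookkeeping is delicate, along with possible exceptional isogeny and covering issues. First I would try to reduce to \cite{MMR}'s height-one result applied with $A=S$. Any leftover small-rank or small-$q$ cases I would treat by explicit generic character tables (CHEVIE).
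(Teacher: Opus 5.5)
\textbf{Verdict: the Lie-type case has a genuine gap.} Your treatment of the sporadic groups, the Tits group and the alternating groups matches the paper, which uses ATLAS/GAP and \cite[Theorem~2.1]{BM}. For groups of Lie type in non-defining characteristic the paper simply invokes \cite[Proposition~4.2, Theorem~4.7]{BM}, and your sketch does not replace this.

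\textbf{The unipotent mechanism fails already in small rank.} You look for a unipotent character in the principal $\Phi_d$-series labelled by some $\phi\in\IRR(W_d)$ of degree $p$. But $W_d$ need not have such a $\phi$, and then no unipotent character at all has $p$-part $p$. Two families show this.
\begin{itemize}
\item $S=\mathrm{PSL}_2(q)$, $p=2$, $q\equiv\pm1\pmod 8$. The Sylow $2$-subgroups are dihedral of order at least $8$ and $W_d\cong C_2$. The unipotent degrees $1$ and $q$ are odd, so the characters you need are those of degree $q+1$ or $q-1$.
\item $S=\mathrm{PSL}_3(q)$, $p=3$, $q\equiv 1\pmod 9$. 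The Sylow $3$-subgroups are nonabelian of order $(q-1)_3^2$ and $W_1\cong S_3$. The unipotent degrees $1$, $q(q+1)$, $q^3$ are all prime to $3$. The required character has degree $(q+1)(q^2+q+1)$, whose $3$-part is exactly $3$, and it lies in the Lusztig series of a regular semisimple element of the split torus.
\end{itemize}
So the obstruction is not a large $W_d$, as you suggest. It is a relative Weyl group with no irreducible character of $p$-part $p$. The missing idea is to leave the unipotent characters and work with non-unipotent Lusztig series via Jordan decomposition; this is the kind of argument supplied in \cite{BM}.

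\textbf{Your fallbacks do not close the gap.} In this paper \cite[Theorem~3.5]{MMR} is applied only when $S<A=\op^{p'}(A)$, and the height-one character there is constructed over the outer $p$-part of $A$. Nothing indicates that it covers $A=S$. Indeed, a uniform height-one statement for $A=S$ of Lie type cannot hold: $\mathrm{PSL}_3(p^f)$ with $f>1$ has no irreducible character of $p$-part $p$. You would therefore have to check that it really applies to $A=S$ in non-defining characteristic.

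\textbf{The CHEVIE deferral is not yet an argument.} Sending ``leftover small-rank or small-$q$ cases'' to CHEVIE works only if you first prove that every failure of the unipotent construction occurs in bounded rank. Your sketch neither states nor proves this, and it places the difficulty in the wrong place (large $W_d$ rather than small $W_d$).

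\textbf{Fix.} For groups of Lie type in non-defining characteristic, cite \cite[Proposition~4.2, Theorem~4.7]{BM}, as the paper does.
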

 
\begin{proof}
For alternating groups, this is part of \cite[Theorem~2.1]{BM}, by considering the principal block. For $S$ of Lie type in nondefining characteristic, it is similarly part of \cite[Proposition~4.2, Theorem~4.7]{BM}. For the remaining groups, this can be seen from the Atlas or GAP Character Table Library \cite{GAP, CtLib}.
\end{proof}

\begin{thm} \label{thm:almost_simple}
Let $A$ be a finite almost simple group with socle $S$. For a prime $p$ dividing $|S|$, let $U\in \syl(A)$, and let $a\in S$ be such that $U^a\neq U$. Assume that $E:=U\cap U^a$ is such that $E\lhd U$ and $E\lhd U^a$.
%and $U/E$ is abelian. 
Then there exists $\chi\in \IRR(A)$ such that $S\not\subset \ker(\chi)$ and $1\ls \chi(1)_p\leqslant |U:E|$. 
\end{thm}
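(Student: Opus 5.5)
The plan is to split according to whether $S$ is a proper subgroup of $\op^{p'}(A)$, using Lemma~\ref{lem:almost_simple_p} in one case and Lemma~\ref{lem:spor} or a Lie-theoretic argument in the other. Throughout we use one basic observation. Since $U^a\neq U$ and $E=U\cap U^a$, we have $E<U$, so $|U:E|\geqslant p$. Hence it suffices to find $\chi\in\IRR(A)$ with $S\not\subset\ker(\chi)$ and $\chi(1)_p=p$, except in the cases where such a character genuinely fails to exist.

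\emph{Step 1: the case $S<A_0$, where $A_0=\op^{p'}(A)$.} Since $S$ is perfect, $S\subset A_0$, and $A_0$ contains every Sylow $p$-subgroup of $A$. We may regard $A_0$ as a subgroup of $\AU(S)$ containing $S$, and $\op^{p'}(A_0)=A_0$. If $S<A_0$, Lemma~\ref{lem:almost_simple_p} gives $\chi_0\in\IRR(A_0)$ with $S\not\subset\ker(\chi_0)$ and $\chi_0(1)_p=p$. Because $A/A_0$ is a $p'$-group, Corollary~11.29 of \cite{CTFG} shows that every $\chi\in\IRR(A|\chi_0)$ satisfies $\chi(1)_p=p\leqslant|U:E|$. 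Such a $\chi$ is nontrivial on $S$, since $\chi_{A_0}$ contains $\chi_0$.

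\emph{Step 2: the case $A_0=S$, so $U\in\syl(S)$.}
\begin{itemize}
\item If $U$ is nonabelian and $S$ is sporadic, alternating, the Tits group, or of Lie type in characteristic different from $p$, Lemma~\ref{lem:spor} gives $\chi\in\IRR(S)$ with $\chi(1)_p=p$. Any $\chi'\in\IRR(A|\chi)$ then has $\chi'(1)_p=p$ by Corollary~11.29 of \cite{CTFG} again. The exception $(3,\mathrm{Co}_3)$ has $\mathrm{Out}=1$, so it is settled by inspecting the Atlas: one lists the possible intersections $E$ of Sylow $3$-subgroups that are normal in both, and checks that some character of $\mathrm{Co}_3$ has $3$-part $9$ with $|U:E|\geqslant 9$.
\item If $S$ is of Lie type in characteristic $p$, apply Borel--Tits to the $p$-subgroup $E$. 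This gives a parabolic subgroup $P_J$ of $S$ with $\nr[S](E)\subset P_J$ and $E\subset Q=\op_p(P_J)$. Both $U$ and $U^a$ lie in $P_J$, and their images in the Levi quotient $L=P_J/Q$ are distinct, so $L$ has order divisible by $p$. Inflate the Steinberg character $\mathrm{St}_L$ to $P_J$; its degree has $p$-part $|U:Q|$. Lemma~\ref{lem:ind_p_bound} then yields a constituent of $\mathrm{Ind}_{P_J}^S(\mathrm{St}_L)$ whose $p$-part is at most $|U:Q|\leqslant|U:E|$. Since $|S:P_J|$ is prime to $p$, this bound comes entirely from $\mathrm{St}_L$.
\item If $U$ is abelian, use $H=\nr[S](E)$, which contains $U$ and $U^a$. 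Then $H/E$ has a nonnormal abelian Sylow $p$-subgroup $U/E$. By Ito--Michler, $H/E$ has a character $\psi$ with $p\mid\psi(1)$. Trivially $\psi(1)_p\leqslant|H/E|_p=|U:E|$, and $|S:H|_p=1$, so Lemma~\ref{lem:ind_p_bound} bounds the $p$-part of some $\chi\in\IRR(S|\psi)$ above by $|U:E|$. Finally, pass from $S$ to $A$ using Corollary~11.29 of \cite{CTFG}.
\end{itemize}

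\emph{Main obstacle.} In the last two cases, Lemma~\ref{lem:ind_p_bound} only gives an upper bound. It does not prevent the constituent of smallest $p$-part from having $p'$-degree.

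In the defining-characteristic case I would first try to rule this out with Harish-Chandra theory. The constituents of $R_L^S(\mathrm{St}_L)$ are unipotent principal-series characters, parametrized by characters of the Hecke algebra of $W$ via the generic degree formula. One would then choose the character corresponding to the sign character of $W_J$ induced to $W$: its generic degree has $q$-part exactly $q^{N_J}=|U:Q|_p$ in the minimal cases, and is in any case greater than $1$ and at most $|U:Q|$. Failing that, I would use the explicit results of \cite{MMR} on height-one characters in defining characteristic.

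In the abelian case I would replace the bound from Lemma~\ref{lem:ind_p_bound} with an equality-type argument. The idea is to control fusion via Burnside's theorem, or to use the known structure of principal blocks with abelian defect (a perfect isometry with $\nr[S](U)$), in order to transfer $\psi$ to a character of $S$ with the same $p$-part. This is the step I expect to be the hardest to make airtight.
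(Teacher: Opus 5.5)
\textbf{There is a genuine gap: the abelian-Sylow case and the defining-characteristic case are left open, and the repairs you sketch would not close them.} Your Step~1 and the nonabelian case away from defining characteristic match the paper. For $(3,\mathrm{Co}_3)$ all that is needed is $|U:E|\neq 3$. The paper gets this from a GAP check that $U\lhd\nr[S](D)$ for every maximal subgroup $D$ of $U$; the Atlas alone does not tell you this.

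\emph{Abelian $U$.} The character you need cannot come from the principal block. When $U$ is abelian, every character of $B_0(S)$ has $p'$-degree by \cite{KM13}. So no perfect isometry or fusion argument for $B_0$ can rescue the lower bound in your Ito--Michler/induction step. The missing idea is to work in a non-principal block:
\begin{enumerate}[(1)]
\item Choose $D\supseteq E$ maximal among intersections of two distinct Sylow $p$-subgroups.
\item Then $D$ is maximal among all such intersections, hence a defect group of some block $B$ of $S$ by \cite[Corollary~6]{Zhang}.
\item Since $D$ is abelian, every $\chi\in\IRR(B)$ has height zero \cite{KM13}, so $\chi(1)_p=|U:D|$.
\item Since $D\neq U$ and $D\supseteq E$, this gives $1<|U:D|\leqslant|U:E|$.
\end{enumerate}

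\emph{Defining characteristic.} The character you single out, attached to $j_{W_J}^W(\mathrm{sgn})$, can have $p'$-degree. Take $G_2(3)$ with $J$ a single simple reflection, so $|U:Q|=3$. Then $j_{W_J}^W(\mathrm{sgn})=\phi_{2,1}$, and the corresponding unipotent character has degree $\tfrac16 q\Phi_2^2\Phi_3=104$; the factor $\tfrac16$ absorbs $q$. So your claim that its $p$-part is ``in any case greater than $1$'' is false.

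Falling back on height-one characters also fails for simple $S$, because they need not exist. For example, the degrees of $\mathrm{PSL}_3(4)$ have $2$-parts $1$, $4$ and $64$ only.

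What is missing is that $|U:E|\geqslant q$, not merely $\geqslant p$. The paper gets this from the Bruhat decomposition:
\begin{enumerate}[(1)]
\item Write $U^a=U^{n_wb}$, so $|U:E|=|U:U\cap U^{n_w}|$.
\item Show this index is the order of the product of the root subgroups over a nonempty union of $F$-orbits of positive roots, hence a power $q^k\geqslant q$.
\item Then \cite[Proposition~3.2]{BM} supplies $\chi$ with $1<\chi(1)_p\leqslant q$.
\end{enumerate}
Your Borel--Tits reduction already yields $|U:E|\geqslant|U:Q|\geqslant q$, with $E=1$ handled by the Steinberg character. So you can finish the same way, citing \cite{BM}, without analysing the constituents of $\mathrm{Ind}_{P_J}^S(\mathrm{St}_L)$.
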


\begin{proof}
Set $A_0:=\op^{p'}(A)$ and note that  $U$, $U^a$, and $E$ all lie in $A_0$. Further, $A_0$ is almost simple with socle $S$.  A character $\psi\in\IRR(A_0)$ that is nontrivial on $S$ and satisfies $1< \psi(1)_p\leq |U:E|$ lies below some character $\chi\in\IRR(A)$ that remains nontrivial on $S$ and has the same $p$-part, by Clifford theory.  We may
therefore assume from now on that
$
 A=\op^{p'}(A).$

If $S\neq A$, then the statement now follows from Lemma \ref{lem:almost_simple_p}, noting that our assumption $U^a\neq U$ means $|U:E|\geqslant p$. Hence, we further assume that $A=S$ is simple. Note here that any $\chi\in\IRR(S)$ with $\chi(1)_p>1$ will necessarily be nontrivial on $S$.

First, suppose that $U$ is abelian. Then we may let $D$ be maximal with respect to inclusion among all intersections of two distinct Sylow $p$-subgroups containing $E$. Then $D$ is also maximal among intersections of distinct Sylow $p$-subgroups, since any strictly larger intersection would still contain $E$. Then by  \cite[Corollary~6]{Zhang}, there is some $p$-block $B$ of $S$ for which $D$ is a defect group. By the ``if" direction of Brauer's height zero conjecture \cite[Theorem~1.1]{KM13} (recalling that $D$ is abelian), every character in $\IRR(B)$ has height zero. That is, for $\chi\in\IRR(B)$, we have 
\[\chi(1)_p=|S:D|_p=\frac{|U|}{|D|}\leqslant |U:E|.\]
Since $|D|<|U|$, we are done in this case.

We therefore now assume that $U$ is nonabelian. By Lemma \ref{lem:spor}, we may then assume that either $(p,S)=(3,\mathrm{Co}_3)$ or $S$ is a simple group of Lie type defined in characteristic $p$. (In the latter case, we may further assume that $S$ is not isomorphic to one of the groups considered in Lemma \ref{lem:spor}.)

Suppose that $(p,S)=(3, \mathrm{Co}_3)$.  In this case, we see from the GAP Character Table Library \cite{GAP, CtLib} that the smallest nontrivial value of $\chi(1)_3$ for $\chi\in\IRR(S)$ is $9$, so it suffices to show that $|U:E|\neq 3$. Recall that $U\neq U^a$ are distinct Sylow $3$-subgroups of $N_S(E)$. If $|U:E|=3$, then $E$ is a maximal subgroup of $U$.  
However, calculation in GAP \cite{GAP} with the realization of $S=\mathrm{Co}_3$ as \verb+PrimitiveGroup(276,3)+ shows us that $
 U\lhd \mathbf{N}_{S}(D)$
for every maximal subgroup $D$ of $U$. (This can be checked with
\verb+Unique(List(MaximalSubgroups(U),D ->IsNormal(Normalizer(S,D),U)))+
 which returns only \verb+true+.) In particular, $U\lhd \mathbf{N}_{S}(E)$, contradicting that $U\neq U^a$.

Finally, let $S$ be a simple group of Lie type defined in characteristic $p$, and such that $S$ is not isomorphic to a group covered by the previous arguments. Then $S=G/\zn(G)$ with $G=\mathbf{G}^F$  a quasi-simple group where $\mathbf{G}$ is a simple, simply-connected algebraic group in characteristic $p$ and $F$ is a Steinberg endomorphism. If $F$ is a Frobenius endomorphism, let $q$ be the power of $p$ such that $F$ defines $\mathbf{G}^F$ over $\mathbb{F}_q$. Otherwise, $S$ is a Suzuki or Ree group and we set $q$ to be such that $q^2=2^{2m+1}$ or $q^2=3^{2m+1}$.

 Then $U=\mathbf{U}^F\zn(G)/\zn(G)$, where $\mathbf{U}$ is the unipotent radical of an $F$-stable Borel subgroup $\mathbf{B}$ of $\mathbf{G}$ and $\mathbf{B}^F=\nr(\mathbf{U}^F)$ by \cite[Corollary 24.11]{MT11}. Note that since $\zn(G)$ is a $p'$-group, it suffices to identify $U$ with $\mathbf{U}^F$. Let $\mathbf{T}\leq\mathbf{B}$ be the corresponding $F$-stable maximal torus with $\mathbf{B}=\mathbf{U}\rtimes\mathbf{T}$. 
 By the Bruhat decomposition \cite[Theorem 24.1]{MT11}, we may find $n_w\in \nr(\mathbf{T})$ outside of $\mathbf{T}^F$ such that $a$ has a lift in $G$ lying in the double coset $\mathbf{B}^F n_w \mathbf{B}^F$. In particular, $U^{a}=U^{n_wb}$ for some $b\in \mathbf{B}^F$. Then $E=U\cap U^{n_w b}=(U\cap U^{n_w})^b$ and $|U:E|=|U:U\cap U^{n_w}|$, since $b$ normalizes $U$. Hence it suffices to consider the intersection $U\cap U^{n_w}$ instead. Now, as $n_w\in\mathbf{G}^F$, we have $U^{n_w}=(\mathbf{U}^{n_w})^F$ and this is the $F$-fixed points of the unipotent radical of another Borel subgroup $\mathbf{B}^{n_w}$ with the same maximal torus $\mathbf{T}$. 

 Now, let $\Phi^+$ be the set of positive roots with respect to $\mathbf{T}$. If $\mathbf{U}_\alpha$ is a root subgroup corresponding to some $\alpha\in\Phi^+$ such that $\mathbf{U}_\alpha$ meets $\mathbf{U}\cap\mathbf{U}^{n_w}$ nontrivially, then $\mathbf{U}_\alpha$ is completely contained in $\mathbf{U}\cap\mathbf{U}^{n_w}$ since the root subgroups are the minimal $\mathbf{T}$-invariant subgroups of $\mathbf{U}$ by \cite[Proposition~11.5]{MT11}. Since this intersection is $F$-stable, we further have the $F$-orbit of $\mathbf{U}_\alpha$ must be completely contained inside $\mathbf{U}^{n_w}$. In particular, $\Phi^+$ is the disjoint union of $I:=\{\alpha\in\Phi^+\mid \mathbf{U}_\alpha\leqslant \mathbf{U}\cap\mathbf{U}^{n_w}\}$ and $J:=\{\alpha\in\Phi^+\mid \mathbf{U}_\alpha\not\leqslant \mathbf{U}\cap\mathbf{U}^{n_w}\}$, and each of $I$ and $J$ are unions of $F$-orbits on $\Phi^+$. Further, our assumption that $U\cap U^a\neq U$ means $\mathbf{U}\cap\mathbf{U}^{n_w}\neq\mathbf{U}$, so $J$ is nonempty. Now, for each $\alpha\in\Phi^+$, there is associated a positive integral power $q_\alpha>1$ of $p$ as in \cite[Proposition~22.2]{MT11}. As $\mathbf{U}$ and $\mathbf{U}\cap\mathbf{U}^{n_w}$ are $F$-stable and $\mathbf{T}$-stable, \cite[Proposition 23.8]{MT11} then yields that $|U|=\sum_{\alpha\in\Phi^+} q_\alpha$ and $|U\cap U^{n_w}|=\sum_{\alpha\in I} q_\alpha$. In particular, this means $|U:E|=|U:U\cap U^{n_w}|=\sum_{\alpha\in J} q_\alpha$. Since $J$ is a nonempty union of $F$-orbits on $\Phi^+$, the exact same argument as \cite[Corollary~23.9]{MT11} yields that this is some power $q^k\geqslant q$ of $q$. 
 
 On the other hand, \cite[Proposition~3.2]{BM} and its proof yields a character in $\IRR(B_0(G))$, trivial on $\zn(G)$, with $1<\chi(1)_p\leq q$. (Note that the exception of groups of type $\operatorname{A}_1$ in loc. cit. have abelian Sylow $p$-subgroups.) This completes the proof, by deflating $\chi$ to $S$.
\end{proof}

We are finally ready to prove Theorem~\ref{thm:key}.

\begin{proof}[Proof of Theorem~\ref{thm:key}]
We let $G$ be a counterexample of minimal order and we follow the notation developed throughout Section \ref{sec:keyresult}. In particular, we let $S_i$ be a simple factor of the unique minimal normal subgroup $K$ of $G$. By Lemmas \ref{lem:active_factor} and \ref{lem:inv_factor}, we may choose this $S_i$ such that $D_i<P_i$, where $D_i:=D\cap S_i$ with $D$ as in Lemma \ref{lem:section} and $P_i:=P\cap S_i$, and such that $S_i$ is $P$-invariant. 
Let $A=\nr(S_i)/\cn(S_i)$. By Theorems~\ref{thm:reduction} and~\ref{thm:almost_simple}, there exists $\psi\in \IRR(\nr(S_i))$ with $S_i\not\subset \ker(\psi_K)$ and $\cn[K](S_i)\subset \ker(\psi_K)$, and such that
\[1\ls \psi(1)_p\leqslant |U:E|\leqslant |P:D|\leqslant |P:Q|\, .\]
Since $\ker(\psi_K)$ is the intersection of the kernels of the irreducible constituents of $\psi_K$, we may choose an irreducible constituent $\theta\in \IRR(K)$ of $\psi_K$ such that $S_i\not\subset \ker(\theta)$ and $\prod_{j\neq i} S_j\subset \cn[K](S_i)\subset  \ker(\theta)$. Thus $\theta = \alpha \prod_{j\neq i} 1_{S_j}$ for a nonprincipal $\alpha\in\IRR(S_i)$. Thus for every $x\in G\sm \nr(S_i)$ we have $\theta^x \neq \theta$. That is, $G_\theta\subset \nr(S_i)$ and, by the Clifford correspondence, $\chi=\psi^G\in \IRR(G)$. Since $P\subset \nr(S_i)$ by Lemma~\ref{lem:inv_factor}, we conclude that
\[1\ls \psi(1)_p = |G:\nr(S_i)|_p \psi(1)_p = \chi(1)_p\leqslant |P:Q|,\]
contradicting the choice of $G$ as a counterexample. 
\end{proof}

\section{Proof of Theorem~\ref{thmA:A}}

We begin work towards a proof of Theorem~\ref{thmA:A}. 
Our notation for blocks follows \cite{BT}. Recall that if $P$ is a nonabelian finite $p$-group, we denote by $m(P)$ the smallest nonlinear irreducible character degree of $P$. If $P$ is abelian, we set $m(P)=\infty$. Thus, if $A$ and $B$ are $p$-groups, we have $m(A\times B)=\min\{m(A),m(B)\}$.

The following lemma guarantees the existence of abelian $p$-sections under the hypotheses of Theorem~\ref{thm:key}.

\begin{lem}\label{lem:abelian_psection}
Let $G$ be a finite group, $p$ a prime, and $P,R$  distinct Sylow $p$-subgroups of $G$. Write $J=P\cap R$ and assume that $|P:J| \le m(P)$. Then $P'\subset J\lhd P$, and $P/J$ is an abelian nonnormal Sylow $p$-subgroup of $\nr(J)/J$. 
\end{lem}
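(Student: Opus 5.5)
The argument rests on one classical fact about $p$-groups: for a $p$-group $P$, every subgroup $H$ with $|P:H|\le m(P)$ contains $P'$ (assuming $P$ nonabelian; if $P$ is abelian, $m(P)=\infty$ and $P'=1$ is trivially contained). To prove this, suppose $P'\not\subset H$. Then $(1_H)^P$ has degree $|P:H|$ and contains a nonlinear constituent. Indeed, if all constituents of $(1_H)^P$ were linear, then $P'$ would lie in the kernel of each of them. The kernel of $(1_H)^P$ is $\core[P](H)\subset H$, so $P'\subset H$, a contradiction. Hence some nonlinear $\chi\in\IRR(P)$ occurs in $(1_H)^P$. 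By Frobenius reciprocity, $\chi_H$ contains $1_H$. I would then like $\chi(1)\le |P:H|$ strictly enough to force equality issues, but equality is allowed, so the plan is the following.

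First apply this to $H=J$, with $J=P\cap R<P$ since $P\neq R$. Any constituent $\chi$ of $(1_J)^P$ satisfies $\chi(1)\le (1_J)^P(1)=|P:J|\le m(P)$. If $P'\not\subset J$, there is a nonlinear such constituent, so $m(P)\le\chi(1)\le|P:J|\le m(P)$, giving $\chi(1)=|P:J|$. Then $(1_J)^P=\chi$ is irreducible, which is impossible since $1_P$ is a constituent of $(1_J)^P$ and $\chi\ne 1_P$. Hence $P'\subset J$, and therefore $J\lhd P$ with $P/J$ abelian. The same argument with $R$ in place of $P$ gives $R'\subset J$, $J\lhd R$ and $R/J$ abelian; here $m(R)=m(P)$ and $|R:J|=|P:J|$ because $R$ is conjugate to $P$.

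Finally, $P$ and $R$ both normalize $J$, so they are Sylow $p$-subgroups of $\nr(J)$. Since $J\subset P\cap R$, their images $P/J$ and $R/J$ in $\nr(J)/J$ are Sylow $p$-subgroups, and they are distinct because $P\neq R$ and both contain $J$. So $P/J$ is an abelian, nonnormal Sylow $p$-subgroup of $\nr(J)/J$.

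The only point requiring care is the trivial-constituent trick: the inequality $\chi(1)\le|P:J|$ must be strict, which follows because $1_P$ also occurs in $(1_J)^P$. No other step poses an obstacle.
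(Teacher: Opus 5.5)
Your proposal is correct and follows essentially the paper's argument: since $1_P$ is a constituent of $(1_J)^P$, every other constituent has degree at most $|P:J|-1<m(P)$ and so is linear, giving $P'\subset \core[P](J)\subset J$, and the symmetric argument for $R$ (using $m(R)=m(P)$) finishes the proof. You phrase the key step by contradiction rather than directly, and your opening paragraph is redundant, but the content is the same.
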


\begin{proof}
Let $1_P \ne \eta\in \IRR(P)$ be an irreducible constituent of $(1_J)^P$. Since $1_P$ is also an irreducible constituent of $(1_J)^P$, we have that  $\eta(1)\le |P:J|-1< m(P)$, and we deduce that $\eta(1)=1$. That is, every irreducible constituent of $(1_J)^P$ is linear. Thus
\[P'\subset \ker((1_J)^P) = \core[P](J)\subset J\,. \]
In particular, $J\lhd P$ and $P/J$ is abelian. Since $m(P)=m(R)$, replacing $P$ by $R$ we similarly deduce that $J\lhd R$ and $R/J$ is abelian. Thus $P$ and $R$ are distinct Sylow $p$-subgroups of $\nr(J)$ and $P/J$ is nonnormal in $\nr(J)/J$. 
\end{proof}

Next, we study the minimal normal abelian subgroups that arise in the proof of Theorem~\ref{thmA:A}. First, we need the following two lemmas.

\begin{lem} \label{lem:prin_p_bound}
Let $G$ be a finite group, let $p$ be a prime, and let $P$ be a Sylow $p$-subgroup of $G$. If $\tau\in \IRR(P)$, then there exists $\chi\in \IRR(B_0(G))$ lying over $\tau$ such that $\chi(1)_p\leqslant \tau(1)$. 
\end{lem}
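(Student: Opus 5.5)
Induce $\tau$ to $G$ and then project onto the principal block. The degree $\tau^G(1)=|G:P|\tau(1)$ has $p$-part exactly $\tau(1)$, since $|G:P|$ is prime to $p$. If we could find an irreducible constituent $\chi$ of $\tau^G$ lying in $B_0(G)$ whose $p$-part is at most $\tau(1)$, we would be done. By Frobenius reciprocity, every constituent of $\tau^G$ lies over $\tau$, so the real task is to control degrees inside the principal block. Controlling the principal-block part of $\tau^G$ directly seems hard, so instead I would restrict a suitable block idempotent computation to $p$-regular elements, or more simply work with the principal-block component of $\tau^G$ and its degree.

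Concretely, write $\tau^G=\sum_B (\tau^G)_B$ as the sum of its block components. I want $(\tau^G)_{B_0}\ne 0$ with $(\tau^G)_{B_0}(1)_p\le\tau(1)$; a constituent of minimal $p$-part then works, as in Lemma~\ref{lem:ind_p_bound}, once we know that the $p$-part of the sum is at least the minimum of the $p$-parts. The key tool is the central character of $B_0$ evaluated via the block idempotent $e_0$. I would use the standard fact that for a $p$-group $P$, the induced character $\tau^G$ restricted to $p$-regular elements is supported on $1$ within the $p$-regular classes meeting $P$. Thus $\tau^G$ vanishes on $p$-regular elements $\ne1$, so $\tau^G$ on $G^0$ equals $\tau(1)|G:P|$ times the indicator of $\{1\}$. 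That indicator function is $\frac{1}{|G|}\sum_{\chi}\chi(1)\chi^0$, so
\[
(\tau^G)^0=\frac{\tau(1)}{|P|}\sum_{\chi\in\IRR(G)}\chi(1)\chi^0 .
\]
Projecting onto blocks (block decomposition respects restriction to $p$-regular elements) gives
\[
((\tau^G)_{B_0})^0=\frac{\tau(1)}{|P|}\sum_{\chi\in\IRR(B_0(G))}\chi(1)\chi^0 ,
\]
which is nonzero, since its value at $1$ is $\frac{\tau(1)}{|P|}\sum_{B_0}\chi(1)^2>0$. Hence $(\tau^G)_{B_0}\ne0$, so some constituent $\chi$ of $\tau^G$ lies in $B_0(G)$ and over $\tau$.

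The remaining, and main, obstacle is the bound $\chi(1)_p\le\tau(1)$. Write $(\tau^G)_{B_0}=\sum a_\chi\chi$. Evaluating at $1$ gives
\[
\sum_{\chi\in B_0}a_\chi\chi(1)=\frac{\tau(1)}{|P|}\sum_{\chi\in B_0}\chi(1)^2 .
\]
If every $\chi$ with $a_\chi\ne0$ had $\chi(1)_p\ge p\tau(1)$, the left side would be divisible by $p\tau(1)$. So it suffices to show that $\sum_{\chi\in B_0}\chi(1)^2/|P|$ is not divisible by $p$. That is precisely the classical fact that $\sum_{\chi\in B}\chi(1)^2$ has $p$-part exactly $|G|_p^2/|D|$ for a block with defect group $D$; for $B_0$ this is $|P|$ (see \cite{CTN}, e.g.\ the result that $|G|_p^2/|D|$ exactly divides $\sum_{\chi\in \IRR(B)}\chi(1)^2$). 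Citing that fact completes the argument.
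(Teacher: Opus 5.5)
Your argument is correct, but it takes a genuinely different route from the paper's.

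\textbf{The paper's route.} It replaces $P$ by $H=P\cn(P)=P\times K$, where $K$ is the normal $p$-complement of $\cn(P)$. This is done precisely so that the induced block $B_0(H)^G$ is defined. It identifies $B_0(H)^G$ with $B_0(G)$ via the Third Main Theorem. It then quotes \cite[Corollary~6.4]{BT}: the $B_0(G)$-component of $(\tau\times 1_K)^G$ has degree with the same $p$-part as $|G:H|\tau(1)$, namely $\tau(1)$.

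\textbf{Your route.} You induce straight from $P$, where block induction is in general not available; already for $G=P\times K$ with $1\neq K$ a $p'$-group, $B_0(P)^G$ is not defined. You compensate with an explicit computation. Since $\tau^G$ vanishes on nonidentity $p$-regular elements, you obtain the exact formula
$(\tau^G)_{B_0(G)}(1)=\frac{\tau(1)}{|P|}\sum_{\chi\in\IRR(B_0(G))}\chi(1)^2$.

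\textbf{Two remarks on your write-up.} First, the step you describe as ``block decomposition respects restriction to $p$-regular elements'' needs an explicit justification. One option is weak block orthogonality. Another is that class functions on $p$-regular elements form the direct sum over blocks $B$ of the spans of $\mathrm{IBr}(B)$, and $\chi^0$ lies in the span of $\mathrm{IBr}(B)$ for $\chi\in\IRR(B)$.

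Second, the real content of your proof is the classical fact that $\sum_{\chi\in\IRR(B)}\chi(1)^2$ has $p$-part exactly $|G|_p^2/|D|$. This is true, but it is not elementary. It is equivalent to the coefficient of $1$ in the block idempotent $e_B$ having $p$-part $|G|_p/|D|$. A standard proof uses that $p^{d}$ is the largest elementary divisor of the Cartan matrix of $B$ and occurs with multiplicity one. So it should be cited precisely from a block-theory text, not from a character-theory reference.

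\textbf{What each approach buys.} Your route gives an exact formula and a uniform statement for all blocks: for any block $B$ with defect group $D$, you get $\chi\in\IRR(B)$ over $\tau$ with $\chi(1)_p\le \tau(1)|P:D|$. The paper's route is shorter, since it uses only standard facts about induced blocks already packaged in \cite{BT}.
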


\begin{proof}
Write $H=P\cn(P)$. Since the Sylow $p$-subgroup $\zn(P)$ of $\cn(P)$ is central, $\cn(P)$ has a normal $p'$-complement $K$ and thus $H=P\times K$. Since $P$ has a unique $p$-block by \cite[Problem~3.1]{BT}, $\tau\in \IRR(B_0(P))$ and thus the character $\hat{\tau} = \tau\times 1_K\in \IRR(H)$ lies in the  principal block of $H$. By   \cite[Theorem 4.14]{BT}, $B_0(H)^G$ is defined and, by Brauer's Third Main Theorem (\cite[Theorem~6.7]{BT}), $B_0(H)^G = B_0(G)$. Now, by \cite[Corollary 6.4]{BT}, we have $(\hat{\tau}^G(1))_p = ( (\hat{\tau}^G)_{B_0(G)}(1))_p$, where
\[(\hat{\tau}^G)_{B_0(G)}  = \sum_{\psi\in \IRR(B_0(G))} [\hat{\tau}^G,\psi] \psi\, .\]
Thus there exists $\chi\in \IRR(B_0(G))$ lying over $\tau$ such that $\chi(1)_p\leqslant (\hat{\tau}^G(1))_p  = \tau(1)$. 
\end{proof}

The first part of the following lemma is \cite[Lemma~4.2]{MMR}.

\begin{lem} \label{lem:prin_quot}
Let $N\lhd G$, and let $Q\in \syl(N)$. If $\cn(Q)\subset N$, then $B_0(G)$ is the unique block of $G$ covering $B_0(N)$. In particular, 
\[\IRR(G/N)\subset \IRR(B_0(G))\, .\]
Hence, if $Z\lhd G$ is an abelian $p$-subgroup, then $\IRR(G/\cn(Z))\subset \IRR(B_0(G))$. 
\end{lem}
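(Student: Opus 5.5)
The first part of the lemma is \cite[Lemma~4.2]{MMR}, so I would take it as given, or recall its proof briefly. Since $\cn(Q)\subset N$, every block $b$ of $G$ covering $B_0(N)$ is regular with respect to $N$. By the Harris--Kn\"orr correspondence, or more directly by Brauer's Third Main Theorem applied through the defect group $Q$, such a block must be $B_0(G)$. The conclusion $\IRR(G/N)\subset \IRR(B_0(G))$ then follows at once. Each $\chi\in\IRR(G/N)$ lies over $1_N\in\IRR(B_0(N))$, so the block of $\chi$ covers $B_0(N)$. By uniqueness, that block is $B_0(G)$.

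For the final assertion, let $Z\lhd G$ be an abelian $p$-subgroup and set $N=\cn(Z)$, which is normal in $G$ because $Z$ is. I need to check that a Sylow $p$-subgroup $Q$ of $N$ satisfies $\cn(Q)\subset N$. Since $Z$ is a normal $p$-subgroup of $G$, it lies in every Sylow $p$-subgroup of $G$. It is also contained in $N$ because $Z$ is abelian. So $Z$ is a normal $p$-subgroup of $N$, and therefore $Z\subset Q$. Any element centralizing $Q$ then centralizes $Z$, which gives $\cn(Q)\subset\cn(Z)=N$. Applying the first part with $N=\cn(Z)$ yields $\IRR(G/\cn(Z))\subset\IRR(B_0(G))$.

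The only real content is the uniqueness statement in the first part. If I had to reprove it, I would use \cite[Theorem~9.4]{CTN}-style arguments. A block $b$ of $G$ covering $B_0(N)$ has a defect group $D$ with $D\cap N=Q$ up to conjugacy. By Brauer's Third Main Theorem applied to $\cn(Q)$, which lies in $N$, the block $b$ equals $B_0(\cn(Q)Q)^G$-type induced blocks, forcing $b=B_0(G)$. I expect this to be the main obstacle, but since the paper cites \cite{MMR} for it, the deduction of the final sentence is routine.
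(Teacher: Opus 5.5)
Your proposal is correct and follows the paper. The paper also attributes the first part to \cite[Lemma~4.2]{MMR}, and proves it by taking a defect group $D$ of a covering block $b$ with $D\cap N=Q$. Then $\cn(D)\subseteq\cn(Q)\subseteq N$ makes $b$ regular with respect to $N$, so $b=B_0(N)^G=B_0(G)$ by \cite[Theorem~9.19]{BT} and Brauer's Third Main Theorem. This is the clean version of your sketch: regularity needs the defect group $D$, not just $\cn(Q)\subseteq N$, and neither Harris--Kn\"orr nor a detour through $B_0(Q\cn(Q))^G$ is needed. Your deductions of $\IRR(G/N)\subseteq\IRR(B_0(G))$ and of the final statement, including the check that $Z\subseteq Q$ gives $\cn(Q)\subseteq\cn(Z)$, are exactly what the paper leaves implicit in ``take $N=\cn(Z)$''.
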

 
\begin{proof}
Since $B_0(G)$ covers $B_0(N)$ by  \cite[Theorem 9.2]{BT}, it suffices to prove that if $B$ is a block of $G$ covering $B_0(N)$, then $B=B_0(G)$. Let $P$ be a defect group for $B$ such that $P\cap N= Q$ (\cite[Theorem~9.26]{BT}). Then $\cn(P)\subset \cn(Q)\subset N$ and, by \cite[Lemma 9.20]{BT}, $B$ is regular with respect to $N$. By \cite[Theorem 9.19]{BT}, $B=B_0(N)^G$. By Brauer's Third Main Theorem, $B=B_0(G)$ and the first part is complete.
For the second part, take $N=\cn(Z)$ and apply the first.
\end{proof}

\begin{thm}\label{thm:remove_pabelian}
Let $G$ be a finite group, let $p$ be a prime, and let $P$ be a nonabelian Sylow $p$-subgroup of $G$. If $Z$ is an abelian normal $p$-subgroup of $G$, then either Theorem~\ref{thmA:A} holds for $G$ or $m(P)=m(P/Z)$. 
\end{thm}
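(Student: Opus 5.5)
The approach is to treat the inequality as one-sided and deal only with the case where it is strict. Since $\IRR(P/Z)\subset \IRR(P)$, we always have $m(P)\leqslant m(P/Z)$. So assume $m(P)<m(P/Z)$; we will produce $\chi\in\IRR(B_0(G))$ with $1<\chi(1)_p\leqslant m(P)$, which is Theorem~\ref{thmA:A} for $G$. Fix $\tau\in\IRR(P)$ with $\tau(1)=m(P)$. Because $m(P)<m(P/Z)$, $\tau$ is not a character of $P/Z$, so $Z\not\subset\ker(\tau)$. By Lemma~\ref{lem:prin_p_bound} there is $\chi\in\IRR(B_0(G))$ over $\tau$ with $\chi(1)_p\leqslant m(P)$. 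The whole difficulty is to guarantee $\chi(1)_p>1$, that is, to rule out that $\chi$ has $p'$-degree. I would split into two cases according to whether $P$ centralizes $Z$.

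\emph{Case 1: $P\not\subset C=\cn(Z)$.} By Lemma~\ref{lem:prin_quot}, $\IRR(G/C)\subset\IRR(B_0(G))$, so it suffices to find $\chi\in\IRR(G/C)$ with $1<\chi(1)_p\leqslant m(P)$. Write $\lambda$ for an irreducible constituent of $\tau_Z$. If $\lambda$ is not $P$-invariant, then $P_\lambda<P$ and $|P:P_\lambda|\leqslant\tau(1)$, since $\tau$ is induced from the stabilizer of $\lambda$. I would then mimic Lemma~\ref{lem:abelian_psection}: use the minimality of $m(P)$ to show that the relevant intersection of Sylow subgroups (or $P_\lambda$ together with its conjugates) gives an abelian section $P/J$ that is nonnormal in $\nr(J)/J$, with $|P:J|\leqslant m(P)$, and with $J\supseteq P\cap C$. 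Applying Theorem~\ref{thm:key} to $G/C$ then yields the required character. If instead every constituent of $\tau_Z$ is $P$-invariant for every minimal-degree $\tau$, I would pass to $P\cap C$ and reduce to the situation of Case 2. I expect this bookkeeping, namely producing a genuinely nonnormal abelian section of $G/C$ of small index, to be the delicate part of Case 1.

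\emph{Case 2: $Z\subset\zn(P)$, i.e.\ $P\subset C$.} Here the key step is a transfer statement. For a normal $p$-subgroup $Z$ of $G$ lying in $\zn(P)$, I would use the focal subgroup theorem, or Gaschütz's theorem on $P\cap G'\cap\zn(\nr(P))$, to get $Z\cap P'=Z\cap G'$. This needs checking, and I expect it to be the main obstacle; if it fails in general, I would fall back to Case~1-type arguments with $\nr(P)$ in place of $P$. Granting it, the argument runs as follows.
\begin{itemize}
\item Since $Z\subset\zn(P)$, the restriction $\tau_Z$ is a multiple of one linear character $\lambda$, and $\lambda\neq 1$.
\item We need $\lambda$ to be nontrivial on $Z\cap P'$. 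If $\lambda$ were trivial there, $\lambda$ would extend to a linear character of $P$; then Gallagher's theorem and the minimality of $m(P)$ should force a character of degree $m(P)$ with $Z$ in its kernel, contradicting $m(P)<m(P/Z)$.
\item Now $Z\cap P'=Z\cap G'$ is normal in $G$, so every $G$-conjugate $\lambda^g$ is also nontrivial on $Z\cap P'$.
\item Hence no linear character of $P$ lies over any $\lambda^g$, and every irreducible constituent of $\chi_P$ is nonlinear.
\end{itemize}
So $\chi(1)$ is a sum of degrees that are all divisible by $p$, giving $1<\chi(1)_p\leqslant m(P)$, as required.
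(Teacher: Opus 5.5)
Your proposal has genuine gaps in both cases. Case~1 lacks the step that produces the second Sylow subgroup, and Case~2 rests on a false lemma.

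\textbf{Case 1.} The missing idea is Clifford theory with respect to the stabilizer $G_\lambda$ in $G$, not in $P$. Your $\chi$ lies over $\tau$, hence over $\lambda$, so $\chi(1)_p\geqslant |G:G_\lambda|_p$. If $p$ divides $|G:G_\lambda|$, you are done at once. Otherwise, a Sylow $p$-subgroup $R$ of $G_\lambda$ containing $P_\lambda$ is Sylow in $G$, and $P\cap R=P_\lambda<P$. This $R$ is what supplies the nonnormality needed for Theorem~\ref{thm:key}. Since $\cn(Z)\subseteq G_\lambda$, one gets $\bar P\cap\bar R=\bar{P_\lambda}$ in $G/\cn(Z)$, with $|\bar P:\bar{P_\lambda}|=|P:P_\lambda|\leqslant m(P)$. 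Then Lemma~\ref{lem:abelian_psection} and Theorem~\ref{thm:key} apply.

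Without this dichotomy, your reduction ``it suffices to find $\chi\in\IRR(G/C)$'' can be impossible. For $G=P=D_8$ and $Z$ its cyclic subgroup of order $4$, we have $m(P)=2<m(P/Z)=\infty$ and $P\not\subseteq C=Z$. Yet $G/C\cong C_2$ has no nonlinear characters.

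Your second subcase ($\lambda$ is $P$-invariant but $P\not\subseteq C$) is only an unexplained reduction to Case~2, and Case~2 does not apply because $Z\not\subseteq\zn(P)$ there. What works, whether or not $P$ centralizes $Z$, is the following. If $\lambda$ is $P$-invariant, then $P\subseteq G_\lambda$.
\begin{itemize}
\item If $\lambda$ extends to $P$, Gallagher gives $m(P/Z)\leqslant m(P)$.
\item If not, every member of $\IRR(P|\lambda)$ has degree divisible by $p$. The Clifford correspondent $\psi\in\IRR(G_\lambda|\lambda)$ of $\chi$ restricts to $P$ entirely over $\lambda$, so $p\mid\psi(1)$ and hence $1<\chi(1)_p\leqslant m(P)$.
\end{itemize}
These are the ingredients of the paper's proof, which conjugates $R$ to $P$ and then splits on whether $\xi^P$ is irreducible.

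\textbf{Case 2.} The transfer claim $Z\cap P'=Z\cap G'$ is false. Take $p=2$, $H=D_8\times\langle b\rangle$ with $b$ of order $2$, $G=H\wr C_3$, $P=H^3$ and $Z=\zn(P)$. Then $Z\lhd G$, $P\subseteq\cn(Z)$, and $m(P)=2<m(P/Z)=\infty$. But for a generator $\sigma$ of the top group, the commutator $[(b,1,1),\sigma]$ has exactly two coordinates equal to $b$. So it lies in $Z\cap G'$ but not in $P'=\zn(D_8)^3$. Gasch\"utz's theorem only controls $Z\cap\zn(\nr(P))$.

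Your argument actually needs something weaker: that $Z\cap P'$ is $G$-invariant. This follows from the Frattini argument. Since $P\in\syl(\cn(Z))$ and $\cn(Z)\lhd G$, we have $G=\cn(Z)\nr(P)$; here $\cn(Z)$ fixes $Z$ pointwise and $\nr(P)$ normalizes $P'$. With this replacement Case~2 goes through. Alternatively, the Clifford-correspondent argument above makes the $G$-conjugates $\lambda^g$ unnecessary.
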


\begin{proof}
Let $\theta\in \IRR(P)$ with $\theta(1)=m(P)$. If $Z\subset \ker(\theta)$, we are done. Otherwise,  let $1_Z\neq \lambda\in \IRR(Z)$ lie under $\theta$. Let $T=G_\lambda$ be the stabilizer of $\lambda$, and let $\eta\in \IRR(P_\lambda|\lambda)$ be the Clifford correspondent of $\theta$. Then
\[m(P)=|P:P_\lambda| \eta(1)\, .\]
Let $R$ be a Sylow $p$-subgroup of $T$ containing $P_\lambda$. By Lemma~\ref{lem:ind_p_bound}, there exists $\xi\in\IRR(R|\eta)$ with 
\[\xi(1)\leqslant |R:P_\lambda|\eta(1)\, .\]
By Lemma~\ref{lem:prin_p_bound}, there exists $\psi\in \IRR(B_0(T))$ lying over $\xi$ such that
\[\psi(1)_p\leqslant |R:P_\lambda|\eta(1)\, .\]
Since $\psi$ lies over $\lambda$, we have $\chi = \psi^G\in \IRR(G)$. By  \cite[Corollary 6.2]{BT} and Brauer's Third Main Theorem, $\chi\in \IRR(B_0(G))$. Moreover, 
\[\chi(1)_p = |G:T|_p\psi(1)_p\leqslant \frac{|P|}{|R|} |R:P_\lambda| \eta(1) = |P:P_\lambda|\eta(1) = m(P)\, .\]
Thus we may assume that $\chi(1)_p = 1$, so that $R$ is a Sylow $p$-subgroup of $G$.

Let $g\in G$ with $P=R^g$, and write 
\[\mu= \lambda^g,\quad J=(P_\lambda)^g,\quad \xi =\eta^g\, .\]
Thus $\mu$ is $P$-invariant, and $\xi^P(1)=m(P)$.

Suppose that $\xi^P\in \IRR(P)$. If $\mu$ extends to $\nu\in \IRR(P)$, we have that $\xi^P \bar{\nu}\in \IRR(P)$ is such that $(\xi^P\bar{\nu})_Z = \xi^P (1)\mu \bar{\mu}  = (\xi^P\bar{\nu})(1)1_Z$ and thus $m(P/Z)=m(P)$. Suppose that $\mu$ does not extend to $P$. By Lemma~\ref{lem:prin_p_bound}, let $\chi\in \IRR(B_0(G))$ lying over $\xi^P$ such that $\chi(1)_p \leqslant \xi^P(1) = m(P)$. If $\chi(1)_p \g 1$, we are done. Otherwise, $\chi$ is of $p'$-degree  and thus its Clifford correspondent $\psi\in \IRR(G_\mu|\mu)$ is of $p'$-degree. Since $\mu$ does not extend to $P$, it follows that every member of $\IRR(P|\mu)$ has degree divisible by $p$ and thus $p$ divides $\psi(1)$, a contradiction. 

If $\xi^P$ is not irreducible, then every constituent of $\xi^P$ is linear. In particular, $\xi=\eta^g$ and $\eta$ are linear. Also, $|P:J| = |P^g:J| = m(P)$. Write $J_0 = P_\lambda$. Since $P_\lambda\subset R\subset G_\lambda$, we have
\[P_\lambda\subset P\cap R
\quad\hbox{ and }\quad
P_\lambda=G_\lambda\cap P\supset R\cap P\, .\]
Thus $P_\lambda =P\cap R$.

Write $C=\cn(Z)$ and $\bar{G}=G/C$. Since $C$ fixes $\lambda$, we have $P\cap C\subset P_\lambda$, and thus
\[|\bar{P}:\bar{P_\lambda}|=|P(P_\lambda C):P_\lambda C| = |P:P\cap P_\lambda C| = |P:P_\lambda(P\cap C)| = |P:P_\lambda| = m(P)\, .\]
We also have $\bar{P}\cap \bar{R} = \bar{P_\lambda}$. Indeed, if $x\in P$ is such that $\bar{x}\in \bar{R}$, we can write $x=yc$ for $y\in R\subset T$ and $c\in C\subset T$. Thus $x\in P\cap T= P_\lambda$. This proves that $\bar{P}\neq \bar{R}$. Lemma~\ref{lem:abelian_psection} applies and, by Theorem~\ref{thm:key}, there exists $\chi\in \IRR(G/C)\subset \IRR(B_0(G))$ such that
\[1\ls \chi(1)_p\leqslant |\bar{P}:\bar{P_\lambda}| = m(P),\]
and this completes the proof of the theorem. 
\end{proof}

%%%%%%%%%%%%%%%%%%%%%%%%%%%%%%%%%%%%%%%%%%%%%%%%%%%%%%%%%%%%%%%%%%%%%%%%%%%%%%%%%%%%%%%%%%%%%%%%%%%%

To handle nonabelian minimal normal subgroups in the proof of Theorem~\ref{thmA:A}, we use the following results.

\begin{lem} \label{lem:prin_perf}
Let $N\lhd G$ be perfect, and let $\theta\in \IRR_{p'}(B_0(N))$. Then there exists $\chi\in \IRR(B_0(G))$ such that $ \chi(1)_p = |G:G_\theta|_p$. 
\end{lem}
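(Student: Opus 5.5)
Write $T=G_\theta$ for the inertia group of $\theta$. The plan is to build a character of the principal block of $T$ lying over $\theta$ whose degree has trivial $p$-part, and then induce it to $G$. The Clifford correspondence will make the induced character irreducible with $p$-part $|G:T|_p$. Block induction will place it in $B_0(G)$.

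\textbf{Step 1: a $p'$-degree character of $T$ over $\theta$.} Choose $P\in\syl(T)$. Since $N\lhd T$, the group $P\cap N$ is a Sylow $p$-subgroup of $N$. Because $N$ is perfect, $\theta$ is $T$-invariant and of $p'$-degree, Lemma~\ref{lem:perf_ext} applied to $NP$ (where $NP/N$ is a $p$-group) gives an extension $\hat\theta\in\IRR(NP)$ of $\theta$. Every irreducible character of $NP$ over $\theta$ has the form $\hat\theta\beta$ with $\beta\in\IRR(NP/N)$, and all of these lie in blocks covering the block of $\theta$. I want one such extension to lie in $B_0(NP)$. Since $NP/N$ is a $p$-group, $NP$ has a unique block covering $B_0(N)$, namely $B_0(NP)$. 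So $\hat\theta\in B_0(NP)$, because $\theta\in B_0(N)$.

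Next, $|T:NP|$ is prime to $p$. By Lemma~\ref{lem:ind_p_bound}, or better by the block version of the argument in Lemma~\ref{lem:prin_p_bound}, there is $\psi\in\IRR(T\mid\hat\theta)$ with $\psi(1)_p=\hat\theta(1)_p=1$. I need $\psi$ in $B_0(T)$. To arrange this, I will project $\hat\theta^T$ onto the principal block, imitating the proof of Lemma~\ref{lem:prin_p_bound}. Concretely: $\hat\theta\in B_0(NP)$, and $NP$ contains a Sylow $p$-subgroup $P$ of $T$, hence contains $P\cn[T](P)$ after possibly enlarging. Then $B_0(NP)^T$ is defined and equals $B_0(T)$ by Brauer's Third Main Theorem. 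By \cite[Corollary~6.4]{BT}, the $B_0(T)$-part of $\hat\theta^T$ has degree with the same $p$-part as $\hat\theta^T(1)$, which is a $p'$-number. Hence some $\psi\in\IRR(B_0(T)\mid\theta)$ has $p'$-degree.

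\textbf{Step 2: induce to $G$.} Set $\chi=\psi^G$. By the Clifford correspondence $\chi$ is irreducible and $\chi(1)_p=|G:T|_p\psi(1)_p=|G:T|_p$. Since $\psi$ lies over $\theta$ with $T=G_\theta$, \cite[Corollary~6.2]{BT} and Brauer's Third Main Theorem give $\chi\in\IRR(B_0(G))$, exactly as in the proof of Theorem~\ref{thm:remove_pabelian}.

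\textbf{Main obstacle.} The delicate point is the block bookkeeping in Step 1. First, I must justify that $B_0(NP)^T$ is defined, which needs $\cn[T](P)\subset NP$ or an application of \cite[Theorem~4.14]{BT}. Second, I must make sure the principal-block projection of $\hat\theta^T$ still lies over $\theta$ while keeping a $p'$-degree constituent. My first attempt is to replace $NP$ by $H=NP\cn[T](P)$ and extend $\hat\theta$ trivially across the normal $p'$-complement, as in Lemma~\ref{lem:prin_p_bound}. If that is awkward, the fallback is to use that $B_0(T)$ is the unique block covering $B_0(N)$ containing characters over $\theta$ of $p'$-degree relative part, via Fong–Reynolds.
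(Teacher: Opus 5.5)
Your architecture is the paper's: extend $\theta$ to $\hat\theta\in\IRR(NP)$, place $\hat\theta$ in $B_0(NP)$, push into $B_0(T)$ with \cite[Corollary~6.4]{BT}, then induce to $G$ and use \cite[Corollary~6.2]{BT} with the Third Main Theorem. Everything except the passage from $B_0(NP)$ to $B_0(T)$ is fine. That passage has a genuine gap, exactly where you suspected.

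Applying \cite[Corollary~6.4]{BT} to $\hat\theta$ requires $B_0(NP)^T$ to be defined, and it need not be, because $\cn[T](P)\not\subset NP$ in general. For instance, if $G=T=N\times K$ with $K\neq 1$ a $p'$-group, then $NP=N$, $K\subset\cn[T](P)$, and $B_0(N)^T$ is not defined. The phrase ``hence contains $P\cn[T](P)$ after possibly enlarging'' is not an argument: once you enlarge, $\hat\theta$ is no longer a character of the group you induce from.

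Your first repair does not work as stated. The group $H=NP\cn[T](P)$ is in general not the direct product of $NP$ with the $p'$-complement $K$ of $\cn[T](P)$, since $K$ need not be normal in $H$ or centralize $N$. So there is no ``trivial extension'' of $\hat\theta$ to $H$ of the kind used in Lemma~\ref{lem:prin_p_bound}.

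Your fallback rests on a false claim. In $T=N\times K$ as above, the characters $\theta\times\beta$ with $\beta\in\IRR(K)$ all lie over $\theta$ and have $p'$-degree, yet they lie in distinct blocks covering $B_0(N)$. Fong--Reynolds adds nothing here, since $T$ is already the inertia group.

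The missing step is to reach the larger group by \emph{covering} rather than by extension, and then apply \cite[Corollary~6.4]{BT} to the new character instead of to $\hat\theta$.
\begin{enumerate}[(1)]
\item Take $H=N\nr[T](P)$, which is the paper's choice; $H=NP\cn[T](P)$ also works. Then $NP\lhd H$ and $|H:NP|$ is prime to $p$.
\item Since $B_0(H)$ covers $B_0(NP)$ by \cite[Theorem~9.2]{BT}, \cite[Theorem~9.4]{BT} gives $\eta\in\IRR(B_0(H))$ lying over $\hat\theta$.
\item By \cite[Corollary~11.29]{CTFG}, $\eta(1)_p=\hat\theta(1)_p=1$.
\item $B_0(H)$ has defect group $P$ and $\cn[T](P)\subset H$. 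Hence $B_0(H)^T$ is defined by \cite[Theorem~4.14]{BT}, and it equals $B_0(T)$ by the Third Main Theorem.
\item Apply \cite[Corollary~6.4]{BT} to $\eta^T$, whose degree is prime to $p$. This yields $\psi\in\IRR(B_0(T))$ lying over $\eta$, hence over $\theta$, of $p'$-degree.
\end{enumerate}
With this replacing the second half of your Step~1, your Step~2 finishes the proof exactly as in the paper.
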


\begin{proof}
Let $T=G_\theta$, and let $S$ be a Sylow $p$-subgroup of $T$. We first prove that there exists $\psi\in \IRR(B_0(T)|\theta)$ of $p'$-degree. Since $N$ is perfect and $\theta(1)$ is a $p'$-number, $\theta$ extends to $\hat{\theta}\in \IRR(NS)$ by Lemma~\ref{lem:perf_ext}. Since $B_0(NS)$ covers $B_0(N)$ and there is a unique block of $NS$ covering $B_0(N)$ by \cite[Corollary 9.6]{BT}, it follows that $\hat{\theta}\in B_0(NS)$. Now, $NS\lhd N\nr[T](S)$ and $B_0(N\nr[T](S))$ covers $B_0(NS)$. By  \cite[Theorem 9.4]{BT}, there exists $\eta\in \IRR(B_0(N\nr[T](S)))$ lying over $\hat{\theta}$. Applying  \cite[Corollary 11.29]{CTFG}, we obtain $\eta(1)_p\leqslant |N\nr[T](S):NS|_p \hat{\theta}(1)_p = 1$. By Brauer's Third Main Theorem and \cite[Problem 4.3]{BT}, $B_0(N\nr[T](S))^T =B_0(T)$. Thus, by \cite[Corollary 6.4]{BT}, there exists $\psi\in \IRR(B_0(T))$ lying over $\eta$ such that $\psi(1)_p\leqslant \eta(1)_p = 1$. By the Clifford correspondence, $\chi = \psi^G\in \IRR(G)$. By \cite[Corollary 6.2, Theorem 6.7]{BT}, $\chi$ lies in the principal block of $G$. Since $\chi(1)_p = |G:T|_p \psi(1)_p = |G:T|_p$, we are done. 
\end{proof}

\begin{lem}\label{lem:simple_product}
Let $G$ be a finite group, let $p$ be a prime, and let $P$ be a Sylow $p$-subgroup of $G$. Suppose $K=S_1\times \cdots\times S_t$ is a nonabelian minimal normal subgroup of $G$, where the $S_i$ are isomorphic nonabelian simple groups of order divisible by $p$. If $P$ acts nontrivially on the set $\{S_i\}_{i=1}^t$, then there exists $\chi\in \IRR(B_0(G))$ with $\chi(1)_p = p$. 
\end{lem}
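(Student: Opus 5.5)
The plan is to construct a $p'$-degree character $\theta$ of $K$ in $B_0(K)$ whose stabilizer in $P$ has index exactly $p$, and then feed it into Lemma~\ref{lem:prin_perf}. Since $K$ is perfect, Lemma~\ref{lem:prin_perf} gives $\chi\in\IRR(B_0(G))$ with $\chi(1)_p=|G:G_\theta|_p$. It therefore suffices to produce $\theta\in\IRR_{p'}(B_0(K))$ with $|G:G_\theta|_p=p$.

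\textbf{Step 1: a $P$-stable subset with $p$ translates.} Since $P$ acts nontrivially on $\Omega=\{S_i\}_{i=1}^t$, choose a nontrivial $P$-orbit $\Delta$. As in the proof of Lemma~\ref{lem:inv_factor}, the Frattini argument shows that the maximal subgroups of $P$ cannot all act transitively on $\Delta$. Choose a maximal subgroup $M\lhd P$ with $|P:M|=p$ that is not transitive on $\Delta$. Then $P/M$ has order $p$ and permutes the $M$-orbits on $\Delta$ transitively. Hence there are exactly $p$ such $M$-orbits $\Lambda_1,\dots,\Lambda_p$, each with stabilizer $M$ in $P$.

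\textbf{Step 2: choice of $\alpha$.} By \cite[Lemma~2.2]{H17}, there is an $\AU(S_1)$-orbit $\OR\subset\IRR(S_1)$ of nontrivial $p'$-degree characters whose length is prime to $p$. We need $\alpha\in\OR$ to lie in $B_0(S_1)$, so the first thing to check is whether the orbit in \cite{H17} can be taken inside the principal block. If that is not available, the fallback is the existence of a nontrivial $\AU(S)$-invariant-up-to-$p'$ orbit of $p'$-degree characters in $B_0(S)$. This is a known consequence of the inductive conditions, and the classification results used in Section~4 can be invoked here. With $\alpha$ in hand, pick a member $S_j\in\Lambda_1$. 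Since $\nr[M](S_j)$ is a $p$-group acting on $\OR$, whose length is prime to $p$, it fixes some $\alpha\in\OR$ transported to $S_j$.

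\textbf{Step 3: the character $\theta$.} Set $\theta=\prod_{x}\alpha^x\times\prod_{S_l\notin\Lambda_1}1_{S_l}$, where $x$ runs over a transversal of $\nr[M](S_j)$ in $M$. This $\theta$ is $M$-invariant and of $p'$-degree. It lies in $B_0(K)$ because the principal block of a direct product is the product of the principal blocks, and $\alpha^x\in B_0(S_l)$. An element of $P$ fixing $\theta$ must stabilize the support $\Lambda_1$, so $P_\theta=M$.

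\textbf{Step 4 (the main obstacle): pass from $P_\theta$ to $|G:G_\theta|_p$.} We need $|G:G_\theta|_p=p$, not just $|P:P_\theta|=p$. I would adapt the argument of Lemma~\ref{lem:ruling_out}: take $S\in\syl(G_\theta)$ containing $M$. If $|S|=|P|$, then $\chi$ would have $p'$-degree, and this must be excluded. Rather than arguing abstractly, I would refine the choice of $\theta$, choosing $M$ and $\alpha$ so that a Sylow subgroup of $G_\theta$ is conjugate into $M$. Concretely, compare $|G_\theta|_p\le|\nr(\Lambda_1)|_p$ and use that $G_\theta$ stabilizes the support $\Lambda_1$, whose $G$-translates are permuted.

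If this route becomes awkward, the alternative is to use Lemma~\ref{lem:prin_perf} only to obtain $\chi$ with $\chi(1)_p$ equal to $|G:G_\theta|_p$. Since $|G:G_\theta|_p$ divides $|P:M|\cdot|\ldots|$, it is a power of $p$. One then shows it is at most $p$ by a counting argument on the sizes of the supports, choosing $\Lambda_1$ to be a block of imprimitivity for $\nr(\Lambda_1)$.
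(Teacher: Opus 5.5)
Your Steps 1--3 reproduce the paper's construction: your $M$ is the paper's $R$, an index-$p$ subgroup of $P$ containing $\nr[P](S_j)$. In Step 2, however, the input actually needed is \cite[Lemma~3.3]{MMR}, which gives a nonprincipal $\nr[P](S_j)$-invariant $\alpha\in\IRR_{p'}(B_0(S_j))$. The orbit from \cite{H17} says nothing about blocks, and your fallback is not an argument.

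The genuine gap is Step 4, and it cannot be closed by re-choosing $M$, $\Lambda_1$ or $\alpha$. Since $P_\theta=M$ has index $p$ in $P$, automatically $|G:G_\theta|_p\in\{1,p\}$. So the upper bound your ``alternative'' aims at is free. The entire difficulty is to exclude $|G:G_\theta|_p=1$, i.e.\ that $G_\theta$ contains a Sylow $p$-subgroup of $G$. This does happen. Take $G=S\wr S_3=(S_1\times S_2\times S_3)\rtimes S_3$ and $p=2$. The only nontrivial $P$-orbit is $\{S_1,S_2\}$, so $M=\nr[P](S_1)$ is forced and $\Lambda_1$ is $\{S_1\}$ or $\{S_2\}$. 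Then $G_\theta$ contains $K\rtimes\langle(2\,3)\rangle$ or $K\rtimes\langle(1\,3)\rangle$, which has full $2$-part, whatever $\alpha$ is. Lemma~\ref{lem:prin_perf} then only returns a character of odd degree. Singletons are blocks of imprimitivity, so that refinement does not help either.

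The paper handles this case with a different tool. Let $N$ be the kernel of the action of $G$ on $\{S_i\}_{i=1}^t$.
\begin{itemize}
\item Anything centralizing $P\cap K\subseteq P\cap N$ fixes every $S_i$, because each $P\cap S_i\neq 1$. Hence $\cn(P\cap N)\subseteq N$, and Lemma~\ref{lem:prin_quot} gives $\IRR(G/N)\subseteq\IRR(B_0(G))$.
\item If $M$ is not a Sylow $p$-subgroup of $G_\theta$, take a $p$-subgroup $Q\subseteq G_\theta$ with $|Q:M|=p$. Since $P\cap N\subseteq\nr[P](S_j)\subseteq M\subseteq Q$, in $\bar{G}=G/N$ we get $|\bar{Q}:\bar{M}|=p=|\bar{P}:\bar{M}|$.
\item Suppose $\bar{Q}\subseteq\bar{P}$, and write $q=xn\in Q$ with $x\in P$ and $n\in N$. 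As $q$ fixes $\theta$ and $n$ normalizes every $S_i$, $x$ stabilizes the support $\Lambda_1$. So $x\in M$ and $\bar{Q}\subseteq\bar{M}$, a contradiction.
\item Hence $\bar{P}/\bar{M}$ is abelian and nonnormal in $\nr[\bar{G}](\bar{M})/\bar{M}$. Theorem~\ref{thm:key} then gives $\chi\in\IRR(G/N)\subseteq\IRR(B_0(G))$ with $1<\chi(1)_p\le p$. In the example above this is the inflated degree-$2$ character of $S_3$.
\end{itemize}
The missing idea is this dichotomy, with Theorem~\ref{thm:key} and Lemma~\ref{lem:prin_quot} covering the case $|G:G_\theta|_p=1$. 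Your purely Clifford-theoretic route through Lemma~\ref{lem:prin_perf} cannot reach that case.
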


\begin{proof}
Let $N=\bigcap_{i=1}^t \nr(S_i)$ be the kernel of the action of $G$ on $\{S_i\}_{i=1}^t$, and write $\bar{G} = G/N$, and use the bar convention. Let $U=P\cap N\in \syl(N)$ and note that $P\cap K\subset U$. Since $P\cap K=\prod_{i=1}^t (P\cap S_i)$ and $P\cap S_i\in \syl(S_i)$ is nontrivial, any element centralizing $P\cap K$ necessarily acts trivially on $\{S_i\}_{i=1}^t$. Thus
\[\cn(U)\subset \cn(P\cap K)\subset N\]
and, by Lemma~\ref{lem:prin_quot}, $\IRR(\bar{G})\subset \IRR(B_0(G))$. We shall find $\chi\in \IRR(\bar{G})$ with $\chi(1)_p = p$.

Let $\Delta\subset \{S_i\}_{i=1}^t$ be a nontrivial $P$-orbit, and let $S_i\in \Delta$. Since $|P:\nr[P](S_i)|= |\Delta|\g 1$, we may let $\nr[P](S_i)\subset R\ls P$ with $|P:R|=p$. Let $\Lambda\subset \Delta$ be the $R$-orbit of $S_i$ so that $|\Lambda| = |R:\nr[P](S_i)| = |\Delta|/p$. In particular, $\Lambda\subsetneq \Delta$. Moreover, $R=P_\Lambda $ is the setwise stabilizer of $\Lambda$ in $P$. By \cite[Lemma~3.3]{MMR}, there exists a nonprincipal and $\nr[P](S_i)$-invariant $\alpha\in \IRR_{p'}(B_0(S_i))$. Let $X$ be a right transversal for $\nr[P](S_i)$ in $R$, and let 
\[\theta = \prod_{x\in X} \alpha^x \prod_{S_j\notin \Lambda} 1_{S_j}\in \IRR_{p'}(B_0(K))\, .\]
Then $\theta$ is $R$-invariant and its stabilizer in $P$ is contained in $P_\Lambda = R$. That is, $R=P_\theta$.

Let $T=G_\theta$, and suppose first that $R$ is a Sylow $p$-subgroup of $T$. By Lemma~\ref{lem:prin_perf}, there exists $\chi\in \IRR(B_0(G))$ such that $\chi(1)_p = |G:T|_p = |P:R|=p$.

Otherwise, let $Q_0\g R$ be a Sylow $p$-subgroup of $T$. Then ${\bf N}_{Q_0}(R)>R$, and we can take $Q$ contained in $T$ with $|Q:R|=p$. 
Now, $U=P\cap N \le {\bf N}_P(S_i) \le R \le Q$, and therefore
$U=R\cap N=Q\cap N$, since $U$ is a Sylow $p$-subgroup of $N$.
Hence $|\bar Q: \bar R|=p=|\bar P:\bar R|$. 
If $\bar Q \subseteq \bar P$, then $Q \subseteq PN$.  
Let $q=xn \in Q$, where $x \in P$ and $n \in N$.
Since $Q \subseteq T=G_\theta$, we deduce that $x \in P_\Lambda=R$, and then $\bar Q \subseteq \bar R$, a contradiction. We deduce that $\bar P/\bar R$ is not normal in $\nr[\bar{G}](\bar{R})/\bar{R}$.
 By Theorem~\ref{thm:key} there exists $\chi\in \IRR(\bar{G})\subset \IRR(B_0(G))$ with 
$1\ls \chi(1)_p \leqslant |\bar{P}:\bar{R}| = p,$
as required. 
\end{proof}

It remains to deal with the case where a Sylow $p$-subgroup $P$ of $G$ acts trivially on the simple factors of a nonabelian minimal normal subgroup $K$.

\begin{thm}\label{thm:nonabelian_min}
Let $G$ be a finite group, let $p$ be a prime, and let $P$ be a nonabelian Sylow $p$-subgroup of $G$. Suppose $K=S_1\times \cdots\times S_t$ is a nonabelian minimal normal subgroup of $G$, where the $S_i$ are isomorphic nonabelian simple groups of order divisible by $p$. Then either Theorem~\ref{thmA:A} holds for $G$ or $|G:K\times \cn(K)|$ is not divisible by $p$.
\end{thm}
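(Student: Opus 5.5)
We argue by cases on how $P$ acts on the set $\{S_1,\dots,S_t\}$. Assume that $p$ divides $|G:K\times\cn(K)|$; we aim to produce $\chi\in\IRR(B_0(G))$ with $\chi(1)_p=p$. Since $P$ is nonabelian we have $m(P)\geqslant p$, so this gives Theorem~\ref{thmA:A} for $G$.

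\emph{Case 1: $P$ moves some $S_i$.} Then Lemma~\ref{lem:simple_product} gives $\chi\in\IRR(B_0(G))$ with $\chi(1)_p=p$ directly.

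\emph{Case 2: $P\subset M:=\bigcap_{i}\nr(S_i)$.} Then $M\lhd G$ and $|G:M|$ is a $p'$-number.
\begin{enumerate}[(i)]
\item \emph{Find an outer $p$-element on one factor.} Since $p$ divides $|G:K\cn(K)|$, there is $x\in P$ with $x\notin K\cn(K)$. We claim $x\notin S_i\cn(S_i)$ for some $i$. Otherwise, for each $i$, $x$ induces on $S_i$ the same automorphism as some $s_i\in S_i$. Since $x$ normalizes every $S_j$, the element $x(s_1\cdots s_t)^{-1}$ centralizes each $S_j$, so it lies in $\cn(K)$. This would force $x\in K\cn(K)$, a contradiction.
\item \emph{Pass to an almost simple quotient.} Fix such an $i$ and set $C_i=\cn[M](S_i)$ and $A=M/C_i$. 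Then $A$ is almost simple with socle $\bar S_i\cong S_i$ (as in the proof of Theorem~\ref{thm:reduction}). The image $\bar P$ is a Sylow $p$-subgroup of $A$, and it contains $\bar x\notin\bar S_i$. Hence $A_0:=\op^{p'}(A)$ satisfies $\bar S_i<A_0$, and $p$ divides $|S_i|$.
\item \emph{Apply the almost simple result.} Lemma~\ref{lem:almost_simple_p} applies to $A_0$ and yields $\psi_0\in\IRR(B_0(A_0))$ with $\psi_0(1)_p=p$ and $\bar S_i\not\subset\ker\psi_0$.
\end{enumerate}

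\emph{Lifting to $G$.} We now move $\psi_0$ up to $G$ in three steps.
\begin{enumerate}[(i)]
\item \emph{From $A_0$ to $A$.} Since $A/A_0$ is a $p'$-group, $B_0(A)$ covers $B_0(A_0)$. By \cite[Theorem 9.4]{BT} there is $\psi\in\IRR(B_0(A))$ over $\psi_0$. By \cite[Corollary 11.29]{CTFG}, $\psi(1)_p=\psi_0(1)_p=p$.
\item \emph{From $A$ to $M$.} Inflating $\psi$ to $M$ gives a character of $B_0(M)$, because the inflation of the principal block of a quotient lies in the principal block.
\item \emph{From $M$ to $G$.} Since $M\lhd G$ with $p'$-index and $B_0(G)$ covers $B_0(M)$, \cite[Theorem 9.4]{BT} again gives $\chi\in\IRR(B_0(G))$ over $\psi$. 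By \cite[Corollary 11.29]{CTFG}, $\chi(1)_p=\psi(1)_p=p$.
\end{enumerate}
This completes Case 2.

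The main obstacle is the first step of Case 2: showing that the divisibility hypothesis actually forces $\op^{p'}$ of some almost simple section $\nr[M](S_i)/\cn[M](S_i)$ to be strictly bigger than its socle. This is needed because Lemma~\ref{lem:almost_simple_p} requires $S<A$. The key point is that an element of $M$ acting as an inner automorphism on every factor lies in $K\cn(K)$. The block bookkeeping in the lifting step (inflation, and covering through normal subgroups of $p'$-index) is routine by comparison.
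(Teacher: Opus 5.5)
Your proof is correct and follows the paper's argument. It uses the same case split via Lemma~\ref{lem:simple_product}, the same fact that an element acting as an inner automorphism on every $S_i$ lies in $K\cn(K)$ (the paper phrases this as an embedding of the kernel of the permutation action, modulo $K\cn(K)$, into $\prod_i \nr(S_i)/S_i\cn(S_i)$), and the same application of Lemma~\ref{lem:almost_simple_p}. The only difference is the final lifting step: you go up through the $p'$-index normal subgroup $\bigcap_i\nr(S_i)$ using block covering and \cite[Corollary 11.29]{CTFG}, whereas the paper induces from $\nr(S_i)$ via the Clifford correspondence. This is a harmless simplification, valid because $P$ lies in that kernel.
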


\begin{proof}
If $P$ acts nontrivially on $\{S_i\}_{i=1}^t$, then by Lemma~\ref{lem:simple_product} there exists $\chi\in \IRR(B_0(G))$ with $\chi(1)_p = p$, and the result follows. Thus we may assume that $P$ acts trivially on $\{S_i\}_{i=1}^t$.

Write $X_i = \nr(S_i)$ and $C_i = \cn(S_i)\lhd X_i$, so that $A_i=X_i/C_i$
is almost simple with socle $S_iC_i/C_i$. Suppose that $p$ divides $|A_i:(S_iC_i/C_i)| = |X_i:S_iC_i|$. Then $Y_i/C_i=\op^{p'}(A_i)\g (S_iC_i/C_i)$. By Lemma~\ref{lem:almost_simple_p}, there exists $\eta\in \IRR(B_0(Y_i))$ with 
$\eta(1)_p=p$, and such that $S_i\not\subset \ker(\eta_K)$ and $\prod_{j\neq i} S_j\subset C_i\cap K \subset \ker(\eta_K)$. Since $\ker(\eta_K)$ is the intersection of the kernels of the irreducible constituents of $\eta_K$, we may choose an irreducible constituent $\theta\in \IRR(K)$ such that $S_i\not\subset \ker(\theta)$, and $\prod_{j\neq i} S_j\subset \ker(\theta)$. Thus $\theta = \alpha \prod_{j\neq i} 1_{S_j}$ for a nonprincipal $\alpha\in \IRR(S_i)$, and we deduce that $G_\theta\subset \nr(S_i)=X_i$. Since $B_0(X_i)$ covers $B_0(Y_i)$ (\cite[Theorem 9.2]{BT}), by  \cite[Theorem 9.4]{BT} there exists $\psi\in \IRR(B_0(X_i))$ lying over $\eta$. By \cite[Corollary 11.29]{CTFG}, $\psi(1)_p = \eta(1)_p = p$ and, by the Clifford correspondence, $\chi =\psi^G\in \IRR(G)$. By \cite[Corollary 6.2]{BT} and by Brauer's third main theorem (\cite[Theorem~6.7]{BT}), $\chi\in B_0(G)$. Since  $P\subset X_i$,  it follows that
\[\chi(1)_p = |G:X_i|_p \eta(1)_p = p,\]
and Theorem~\ref{thmA:A} holds in this case. Thus we may assume that $|X_i:S_iC_i|$ is a $p'$-number for all $1\leqslant i\leqslant t$.

Let $N\lhd G$ be the kernel of the action of $G$ on $\{S_i\}_{i=1}^t$ by conjugation. Since $P$ acts trivially on this set,  $P\subset N$ and $|G:N|$ is a $p'$-number. Now, 
\[N/\cn[N](S_i) = N/(N\cap C_i)\cong NC_i/C_i\subset X_i/C_i\]
and, writing $C = \bigcap_{i=1}^t C_i =\cn(K)\subset N$, we obtain a natural monomorphism
\[N/C \rightarrow  \prod_{i=1}^t X_i/C_i\, .\]
The image of $KC/C$ under this monomorphism is $\prod_{i=1}^t S_iC_i/C_i$. Thus $N/KC\cong (N/C)/(KC/C)$ is isomorphic to a subgroup of 
\[\prod_{i=1}^t (X_i/C_i)/(S_iC_i/C_i)\cong \prod_{i=1}^t X_i/S_iC_i\, .\]
Since $|X_i:S_iC_i| $ is not divisible by $p$ for all $1\leqslant i\leqslant t$, we conclude that $p$ does not divide $|N:KC|$. Since $p$ does not divide $|G:N|$, the result follows. 
\end{proof}

We are finally ready to prove Theorem~\ref{thmA:A}.

\begin{proof}[Proof of Theorem~\ref{thmA:A}]
We proceed by induction on $|G|$. Let $P \in {\rm Syl}_p(G)$ be nonabelian.
By \cite[Theorem 6.10]{BT}, we may assume that $\op_{p'}(G)=1$.
Suppose that $N \lhd G$ is proper and has $p'$-index. By the inductive hypothesis,
there exists $\gamma \in {\rm Irr}(N)$ in the principal block such that $1<\gamma(1)_p \le m(P)$.
Now by \cite[Theorems~9.2,~9.4]{BT}, there is $\chi \in {\rm Irr}(G)$ in the principal block over $\gamma$.
By \cite[Corollary~11.29]{CTFG}, we have that $\chi(1)_p=\gamma(1)_p$, and we are done in this case.
Hence ${\bf O}^{p'}(G)=G$.

Let $K$ be a minimal normal subgroup of $G$. Suppose first that $K$ is abelian.
Then $K$ is a $p$-group.
By Theorem~\ref{thm:remove_pabelian}, we may assume that $m(P/K)=m(P)$.  
In particular, $P/K$ is nonabelian. By the inductive hypothesis, there exists  $\chi\in \IRR(B_0(G/K))\subset \IRR(B_0(G)) $ with $1\ls \chi(1)_p  \leqslant m(P/K)=m(P)$ and we are done in this case.

Thus we may assume that $K$ is nonabelian and, by Theorem~\ref{thm:nonabelian_min}, we may assume that $K\times \cn(K)$ is of $p'$-index. Therefore
$G=K\times C$, where $C=\cn(K)$. Since $P=(P\cap K)\times (P\cap C)$, we have $m(P)=\min\{ m(P\cap K),m(P\cap C)\}$. If $m(P)=m(P\cap C)$, the inductive hypothesis applies and we may let $\chi\in \IRR(B_0(C))$ with $1\ls \chi(1)_p \leqslant m(P\cap C)=m(P)$. Then $1_K\times \chi\in \IRR(B_0(G))$ does the job. If $m(P)=m(P\cap K)$ and $K\ls G$, we can similarly prove the result. Thus we may assume that $G=K$. Since $K$ is minimal normal, it is a nonabelian simple group. The result now follows by the main result of \cite{BM}. 
\end{proof}

\end{document}